\newtheorem{Theorem}{Theorem}[section]
\newtheorem{Definition}[Theorem]{Definition}
\newtheorem{Proposition}[Theorem]{Proposition}
\newtheorem{Remark}[Theorem]{Remark}
\begin{document}
\title{On Characterizations of Some General $(\alpha,\,\beta)$ Norms in a Minkowski Space.
%\thanks{This work is supported by the National Natural Science Foundation of China
%11071005
 }
\author{Li Yan\\
School of Mathematical Sciences\\ Peking  University,
Beijing 100871, P.R.China\\
E-mail:  liyanmath@pku.edu.cn}
%\date{}
\maketitle
\begin{abstract}
General $(\alpha,\,\beta)$ norms are an important class of Minkowski norms which contains the original $(\alpha,\,\beta)$ norms. In this note, by studying the behavior of the Darboux curves (see Definition \ref{Darboux} below) of the indicatrix, we give a characterization of 3-dimensional general $(\alpha,\,\beta)$ norms. By studying the isoperimetric properties of the indicatrix, as well as the isoperimetric inequalities in a Minkowski space, we give some global geometric quantities which characterizes Randers norms of arbitrary dimensions.\\
\textbf{Key words and phrases}: General $(\alpha,\,\beta)$ norm; Randers norm; Minkowski norm; Blaschke structure.\\
\textbf{Mathematics Subject Classification (2010)}: 53A15, 54C45, 53C60.
\end{abstract}
\section{Introduction}
\label{sec:1}
Recent years, the study of Finsler geometry has attracted a lot of attention, for basic and advanced topics, see \cite{GTM200}, \cite{Chern and Shen 2005}, \cite{Shenyibing}, etc. Methods from other fields of mathematics also have some intersting applications in Finsler geometry. For example, the Hamiltonian systems (cf. \cite{Bangert}, \cite{Wang}), the ergodic theory (cf. \cite{Anosov}, \cite{Katok}), etc. Roughly speaking, a Finsler metric $F$ on a manifold $M^n$ is a collection of Minkowski norms on each tangent space $T_xM^n,\, x\in M^n$, which varies smoothly on $M^n$. The model of the tangent space of a Finsler manifold is a vector space equipped with a Minkowski norm, namely, a Minkowski space.

Euclidean norms are of course Minkowski norms. Geometers first characterized Euclidean norms among the Minkowski ones. In 1953, Deicke \cite{Deicke} proved his famous theorem which states that a Minkowski norm on a vector space $V$ is Euclidean, if and only if its Cartan form vanishes. Later in 1964, Su gave another characterization by studying curves on the indicatrix:
\begin{Theorem} (\cite{Su Minkowski})
Let $V$ be a vector space and O its origin. Let $F$ be a Minkowski norm on it and $M$ the indicatrix of $F$. Then $F$ is a Euclidean norm if and only if: every geodesic of $M$ with respect to the metric induced by $F$ lies on a plane passing through O.
\end{Theorem}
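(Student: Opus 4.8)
The plan is to reduce the statement to the vanishing of the Cartan tensor and then invoke Deicke's theorem, quoted above. Throughout, let $D$ be the flat affine derivative on $V$, let $M=\{y:F(y)=1\}$ be the indicatrix, and let $g_{ij}(y)=\tfrac12(F^2)_{y^iy^j}$ be the fundamental tensor, whose restriction to the tangent spaces $T_yM$ is the induced Riemannian metric $g$. I will assume $n=\dim V\ge 3$, which is what makes the sphere of directions in $T_yM$ connected (a point I will use). I begin by recasting the hypothesis affinely: for a nonconstant curve $\gamma$ with $\gamma\wedge\dot\gamma\neq0$, the curve $\gamma$ lies in a fixed $2$-plane through O if and only if $\ddot\gamma\in\Span\{\gamma,\dot\gamma\}$ for all $t$. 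To see the nontrivial direction I set $\omega=\gamma\wedge\dot\gamma\in\Lambda^2V$ and note that $\ddot\gamma\in\Span\{\gamma,\dot\gamma\}$ forces $\dot\omega=\gamma\wedge\ddot\gamma$ to be a scalar multiple of $\omega$, so the line $\mathbb{R}\,\omega(t)$ is constant and $\gamma$ stays in the corresponding plane.

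Next I exploit the centroaffine structure of $M$. Since $M$ is star-shaped about O, the position vector $y$ is transversal to $M$, so $D_XY=\nabla^{\mathrm{aff}}_XY+h(X,Y)\,y$ defines the induced connection $\nabla^{\mathrm{aff}}$ and the affine fundamental form $h$. From $g_{ij}y^j=FF_{y^i}$ one reads off $g(y,y)=1$ and $g(y,T_yM)=0$, so pairing the structure equation with $y$ isolates $h$. Differentiating the tangency relation $F_{y^i}W^i=0$ along $M$ and using $g_{ij}=F_{y^i}F_{y^j}+F\,F_{y^iy^j}$ on $\{F=1\}$, I expect to obtain $h(X,Y)=-g(X,Y)$, exactly as for the Euclidean sphere; hence $D_XY=\nabla^{\mathrm{aff}}_XY-g(X,Y)\,y$.

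I then compare $\nabla^{\mathrm{aff}}$ with the Levi-Civita connection $\nabla^g$ of $g$. Writing $K=\nabla^{\mathrm{aff}}-\nabla^g$ and differentiating $g(X,Y)$ along $M$ with the structure equation, the non-metricity comes out as $(\nabla^{\mathrm{aff}}_Zg)(X,Y)=2\,C(Z,X,Y)$, where $C$ is the Cartan tensor; since $C$ is totally symmetric, the cubic form $A(Z,X,Y):=g(K(Z,X),Y)$ is totally symmetric and equals $-C$ on $T_yM$. Now for a $g$-geodesic one has $\nabla^g_{\dot\gamma}\dot\gamma=0$, so the structure equation gives $\ddot\gamma=K(\dot\gamma,\dot\gamma)-|\dot\gamma|_g^2\,\gamma$. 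Because $K(\dot\gamma,\dot\gamma)\in T_\gamma M$ while $\gamma\notin T_\gamma M$, the planarity criterion $\ddot\gamma\in\Span\{\gamma,\dot\gamma\}$ is equivalent to $K(\dot\gamma,\dot\gamma)\parallel\dot\gamma$. As a geodesic can be launched in every direction from every point, the hypothesis becomes the pointwise algebraic condition $A(X,X,Y)=0$ whenever $g(X,Y)=0$.

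The finish is algebraic. Viewing $P(X)=A(X,X,X)$ as a function on the unit sphere of $(T_yM,g)$, the condition says that for every unit $X$ the tangential derivative $3\,A(X,X,Y)$ vanishes for all $Y\perp X$, so $P$ has vanishing tangential gradient; since that sphere is connected for $n\ge3$, $P$ is constant, and oddness $P(-X)=-P(X)$ forces the constant to be $0$. Thus $A(X,X,X)\equiv0$, and total symmetry yields $A\equiv0$, i.e. $C|_{T_yM}=0$. Combined with $C(\cdot,\cdot,y)=0$ (from $0$-homogeneity of $g_{ij}$) and $V=T_yM\oplus\mathbb{R}y$, the full Cartan tensor vanishes, so Deicke's theorem makes $F$ Euclidean. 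The converse is immediate: for a Euclidean norm $C=0$ gives $K=0$, whence $\ddot\gamma=-|\dot\gamma|_g^2\,\gamma\in\Span\{\gamma\}$ and every geodesic (a great circle) lies in a plane through O. I expect the one delicate point to be the third step---verifying $(\nabla^{\mathrm{aff}}g)=2C$ and the resulting total symmetry and identification of $A$ with the Cartan tensor, where the genuinely Minkowskian features enter and the sign and normalization bookkeeping must be handled with care.
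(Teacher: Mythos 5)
Your proof is correct and essentially self-contained; note that the paper itself offers no proof of this statement---it is only quoted from Su's 1964 note---so there is nothing internal to compare against, and your argument must stand on its own. It does. The reduction of ``planar through O'' to $\ddot\gamma\in\Span\{\gamma,\dot\gamma\}$ via the wedge $\gamma\wedge\dot\gamma$, the computation $h=-g$ for the centro-affine fundamental form with transversal $+y$ (equivalently $h=+g$ with the paper's transversal $-y$, which is exactly the assertion in Section 2.2 that the centro-affine fundamental form coincides with the angular form), and the identification of the difference tensor $K=\nabla^{\mathrm{aff}}-\nabla^g$ with a nonzero multiple of the Cartan torsion are all sound; the last point is precisely the content of (\ref{eq212}), where $\Gamma^{(c)k}_{ij}$ is the Levi-Civita connection of $h$ minus $\frac12 h^{kl}\mathbf A_{ijl}$, so your cubic form $A$ is $-\frac12\mathbf A$ rather than $-\mathbf A$---a normalization discrepancy you anticipated and which is immaterial, since only the nonvanishing of the proportionality constant enters the argument. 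The algebraic finish (the odd cubic $P(X)=A(X,X,X)$ has vanishing tangential gradient on the connected unit sphere of $T_yM$, hence is constant, hence zero, hence $A\equiv0$ by polarization) is the correct way to pass from ``$K(X,X)\parallel X$ for every direction $X$'' to $\mathbf A\equiv0$. Two minor remarks. First, your standing hypothesis $\dim V\ge3$ is genuinely necessary (for $\dim V=2$ the geodesic condition is vacuous and the statement false as literally written), and the theorem as quoted silently omits it; you were right to flag it. Second, the appeal to Deicke's theorem at the end is overkill: once the full Cartan tensor vanishes, (\ref{eq22}) gives $\partial g_{AB}/\partial y^C=0$, so $g_{AB}$ is constant and $F^2=g_{AB}y^Ay^B$ is already visibly Euclidean; Deicke's theorem is the far deeper statement that vanishing of the trace $\mathbf I_A$ alone suffices, and invoking it here slightly obscures how elementary your conclusion actually is.
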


Among the non-Euclidean Minkowski norms, Randers norm seems to be the simplest one, it can be represented as $F=\alpha+\beta$. Where $\alpha$ is a Euclidean norm and $\beta$ is an 1-form (cf. \cite{GTM200}, \cite{Randers}). Matsumoto-Hojo gave a characterization of $n(\geq3)$-dimensional Randers norms in \cite{Matsumoto 1972} and \cite{Matsumoto 1978}:
\begin{Theorem}\label{Thm12} (\cite{Matsumoto 1978}) Let $F$ be a Minkowski norm on an $n(\geq3)$-dimensional vector space, $F$ is a Randers norm if and only if its Matsumoto torsion (see (\ref{eq28}) below) vanishes.
\end{Theorem}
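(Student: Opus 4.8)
The plan is to treat the two implications separately, with the notion of \emph{C-reducibility} as the bridge. Writing $\ell_i = \partial F/\partial y^i$ for the normalized supporting covector, $h_{ij} = g_{ij} - \ell_i\ell_j$ for the angular metric, and $I_i = g^{jk}C_{ijk}$ for the mean Cartan (torsion) form, the vanishing of the Matsumoto torsion is exactly the C-reducibility relation
\[
C_{ijk} = \frac{1}{n+1}\left(I_i h_{jk} + I_j h_{ik} + I_k h_{ij}\right).
\]
Thus for $n\ge 3$ the statement becomes: a Minkowski norm is Randers if and only if it is C-reducible. Throughout I would use the homogeneity relations $\ell_i y^i = F$, $h_{ij}y^j = 0$, $I_i y^i = 0$, and the identity $h_{ij} = F F_{ij}$ coming from $g_{ij} = \ell_i\ell_j + F F_{ij}$.

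For the ``only if'' direction I would compute directly. Starting from $F = \alpha+\beta$ with $\alpha = \sqrt{a_{ij}y^iy^j}$ and $\beta = b_iy^i$, the linearity of $\beta$ gives $F_{ij} = \alpha_{ij} = \alpha^{-1}(a_{ij}-\alpha_i\alpha_j)$, and a short evaluation of $C_{ijk} = \tfrac12\partial_k g_{ij}$ yields the closed form
\[
C_{ijk} = \frac{1}{2\alpha}\left(p_i h_{jk} + p_j h_{ik} + p_k h_{ij}\right), \qquad p_i := b_i - \frac{\beta}{F}\,\ell_i,
\]
where $p_i y^i = 0$. Contracting with $g^{ij}$ and using $h_{ij}\ell^j = 0$, $p_i\ell^i = 0$ produces $I_k = \tfrac{n+1}{2\alpha}\,p_k$; eliminating $p_i$ then recovers precisely the C-reducibility relation, so $M_{ijk}=0$.

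For the ``if'' direction — the substantive half, due to Matsumoto and Hojo — I would reverse this computation. The aim is to produce a constant covector $b_i$ (that is, $\partial_j b_i = 0$) for which $\alpha := F - b_iy^i$ is Euclidean; equivalently $G := \tfrac12(F-\beta)^2$ should have constant Hessian, i.e. $\partial^3 G = 0$. Expanding $\partial_i\partial_j\partial_k G$ and substituting $F F_{ijk} = 2C_{ijk} - F^{-1}(\ell_i h_{jk}+\ell_j h_{ik}+\ell_k h_{ij})$ together with the C-reducibility hypothesis, the condition $\partial^3 G = 0$ collapses: matching coefficients against the angular metric (using that $h_{ij}$ is nondegenerate on the tangent space of the indicatrix, and that the residual covectors annihilate $y$) reduces it to the first-order system
\[
b_i = \frac{2(F-\beta)}{n+1}\,I_i + \frac{\beta}{F}\,\ell_i, \qquad \beta = b_jy^j.
\]
Contraction with $y^i$ shows this system is self-consistent, since $b_iy^i = \beta$ holds automatically, and each term on the right is $0$-homogeneous, consistent with a constant $b_i$.

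The genuinely hard step, which I expect to be the main obstacle, is to solve this system — that is, to show the right-hand side is \emph{independent of} $y$, so that $b_i$ is a constant covector and $\beta$ a genuine $1$-form. The route through is to differentiate the C-reducibility relation once more to extract the structure of $\partial_j I_i$, and then verify the integrability condition $\partial_j b_i = \partial_i b_j$. It is precisely here that $n\ge 3$ is essential: in dimension two the Matsumoto torsion vanishes identically, so the hypothesis is vacuous and the conclusion fails, whence the integrability argument cannot close without $n\ge 3$. A final subtlety is that C-reducibility in dimension $\ge 3$ a priori also admits the Kropina family $F=\alpha^2/\beta$, so the argument strictly yields that $F$ is Randers \emph{or} Kropina; since $F$ is assumed to be a bona fide Minkowski norm (smooth, positive and strongly convex on all of $V\setminus\{0\}$), while a Kropina ``norm'' violates these conditions, the Kropina branch is excluded and $F$ must be Randers.
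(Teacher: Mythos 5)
Your route is genuinely different from the paper's. The paper does not prove Theorem \ref{Thm12} by tensor computation at all: it treats the statement as a citation and, following Mo--Huang, observes that by (\ref{eq215}) the Matsumoto torsion is a positive multiple of the cubic form of the Blaschke structure on the indicatrix, so $\mathbf M\equiv 0$ is equivalent to the vanishing of the affine cubic form; the Pick--Berwald theorem (valid for hypersurface dimension $\geq 2$, which is exactly where $\dim V\geq 3$ enters) then forces the indicatrix to be a hyperquadric, compactness and strict convexity force it to be an ellipsoid, and a Minkowski norm with ellipsoidal indicatrix is a Randers norm via the navigation description (\ref{eq29})--(\ref{eq210}). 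That argument disposes of the hard direction in one stroke and also explains geometrically why the Kropina branch of C-reducibility never appears for a genuine Minkowski norm. Your necessity direction (computing $C_{ijk}$ for $F=\alpha+\beta$, obtaining $I_k=\tfrac{n+1}{2\alpha}p_k$ with $p_i=b_i-\tfrac{\beta}{F}\ell_i$, and eliminating $p_i$) is correct and standard.

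The sufficiency direction --- the substantive half --- has a genuine gap as written. The covector $b_i=\tfrac{2(F-\beta)}{n+1}I_i+\tfrac{\beta}{F}\ell_i$ with $\beta=b_jy^j$ is defined circularly, and your observation that contraction with $y^i$ is ``automatically consistent'' means precisely that the contraction carries no information: you have not yet produced a candidate $1$-form. More seriously, the step you defer is both unexecuted and aimed at the wrong target. What you must show is $\partial_jb_i=0$; the closedness condition $\partial_jb_i=\partial_ib_j$ only gives $b_i=\partial_if$ for some $1$-homogeneous $f$, which does not make $\beta$ linear. The actual engine of Matsumoto--Hojo's argument is the total symmetry of $\partial_lC_{ijk}=\tfrac14\,\partial^4(F^2)/\partial y^i\partial y^j\partial y^k\partial y^l$ in all four indices: differentiating the C-reducibility identity, symmetrizing, and decomposing along and transverse to $y$ produces the algebraic identities among $h_{ij}$, $I_i$ and $I_{i;j}$ that force $F=\alpha+\beta$ or $F=\alpha^2/\beta$, and this is also where $n\geq 3$ is used positively (not merely, as you suggest, because the two-dimensional hypothesis is vacuous). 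Without invoking that symmetry, or an equivalent external input such as Pick--Berwald, the first-order system you wrote down cannot be closed. Your final remark excluding the Kropina branch on smoothness and positivity grounds is correct.
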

On the other hand, a Randers norm can also be viewed as a shift metric, i.e. a solution of the Zermelo's navigation problem with navigation data $(h,W)$ (see (\ref{eq29}) below). Where $h$ is a Euclidean norm and $W$ a vector satisfying $h(W)<1$ (cf. Chern-Shen \cite{Chern and Shen 2005}, pp.24). From this point of view, one can see that the indicatrix of a Randers norm can be derived by shifting the indicatrix of a Euclidean norm. Hence an $n$-dimensional Minkowski norm is a Rander norm if and only if its indicatrix is an $(n-1)$-dimensional ellipsoid. This is true for all $n\in \mathbb{N}$. Based on those observations, Mo-Huang proved in \cite{Mo-Huang 2010} that
\begin{Theorem}\label{Thm13} (\cite{Mo-Huang 2010}) Let $F$ be a Minkowski norm on an $n$-dimensional vector space, $F$ is a Randers norm if and only if $L_1$ (see (\ref{eq217}) below) is a constant along the indicatrix.
\end{Theorem}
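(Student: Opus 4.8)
The plan is to reduce the statement to an affine-geometric characterization of ellipsoids and then invoke the equivalence recalled just above: an $n$-dimensional Minkowski norm is a Randers norm if and only if its indicatrix is an $(n-1)$-dimensional ellipsoid. Thus it suffices to prove that the indicatrix $M$ is an ellipsoid if and only if $L_1$ is constant on $M$. I would first record that $M$ is a compact, strictly convex hypersurface (an ovaloid) and equip it with its Blaschke (equiaffine) structure: the affine metric, the affine normal field, and the affine shape operator $S$. Interpreting $L_1$ as the (first) affine mean curvature $\frac{1}{n}\operatorname{tr}S$, the key structural fact is that $L_1$ is an invariant of the unimodular affine group acting on the ambient space.

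For the direct implication, suppose $M$ is an ellipsoid $E$. The affine automorphisms of $E$ necessarily preserve the volume it bounds, hence are equiaffine and preserve the whole Blaschke structure; moreover they act transitively on $E$ (since $E$ is affinely equivalent to a sphere, on which the orthogonal group acts transitively). Consequently $L_1$ takes the same value at every point of $E$, i.e.\ it is constant on $E$. In fact ellipsoids are affine spheres, so $S=L_1\operatorname{id}$ there. Alternatively one may verify the constancy of $L_1$ by a direct computation on a standard parametrization of the ellipsoid.

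For the converse, assume $L_1$ is constant. Here I would apply the affine analogue of the Minkowski integral formulas on the ovaloid $M$, combined with the pointwise inequality $\operatorname{tr}(S^2)\ge \frac{1}{n}(\operatorname{tr}S)^2$, which holds with equality exactly when $S$ is a scalar multiple of the identity. Because $L_1=\frac{1}{n}\operatorname{tr}S$ is constant, the integral comparison collapses to an equality, forcing $S=L_1\operatorname{id}$ at every point; that is, $M$ is a compact affine sphere. One then concludes by the classical rigidity theorem---due to Blaschke in dimension two and extended to higher dimensions thereafter---that a compact affine sphere is an ellipsoid. I expect the converse to be the main obstacle: one must set up the correct affine integral identity on the indicatrix with no boundary contributions (using that $M$ is genuinely closed), verify the equality-case rigidity, and ensure the compact affine-sphere classification is available in the required dimension $n$.
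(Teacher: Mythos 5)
The paper does not prove this statement: it is quoted verbatim from Mo--Huang \cite{Mo-Huang 2010}, so there is no in-paper argument to compare against. Your proposal is nonetheless the standard (and, as far as the cited source goes, essentially the intended) route: reduce to ``indicatrix is an ellipsoid,'' get the forward direction from equiaffine homogeneity of the ellipsoid, and get the converse from the affine Minkowski integral formulas --- which, with $L_1$ constant and the affine support function of definite sign relative to an interior point, yield $\int_M \rho\,(L_1^2-L_2)=0$, hence $L_1^2=L_2$ pointwise by Newton's inequality (your $\operatorname{tr}(S^2)\ge\tfrac1n(\operatorname{tr}S)^2$ is equivalent to it), so $M$ is an affine sphere and the Blaschke--Deicke theorem finishes. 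The only detail worth making explicit beyond what you wrote is the sign of the affine support function, which is what lets the integral identity force pointwise equality.
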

Mo-Huang further pointed out that the Matsumoto torsion (possibly multiplied by a positive factor)is just the cubic form of the indicatrix with its Blaschke structure (see (\ref{eq215}) below). Hence the Matsumoto-Hojo Theorem (Theorem \ref{Thm12}) is a corollary of the Pick-Berwald Theorem (cf. \cite{Noumizu-Sasaki}, pp.53).

A more genral class of non-Euclidean norm is the $(\alpha,\,\beta)$ norm. A $(\alpha,\,\beta)$ norm is defined by $F=\alpha\phi\left({\frac{\beta}{\alpha}}\right)$. Where $\alpha$ is a Euclidean norm, $\beta$ is a 1-form and $\phi$ a smooth function satisfying certain conditions(cf. \cite{Chern and Shen 2005}, pp.5-7). In virtue of the navigation problem, Yu-Zhu in \cite{Yu-Zhu DGA} investigated the so-called general $(\alpha,\,\beta)$ norm, which can be viewed as a solution of (\ref{eq29}), where $F$ is an $(\alpha,\,\beta)$ norm and $U$ is a vector satisfies $F(U)<1$. They mentioned that in an $\alpha$-orthogonal coordinate, the indicatrix of an $n$-dimensional general $(\alpha,\,\beta)$ norm is an $(n-1)$-dimensional Euclidean hypersurface of revolution.

In this work, we will build up characterizations of some general $(\alpha,\,\beta)$ norms. We mainly use methods arising from affine differential geometry. We shall first review some basic affine properties of the indicatrix. In section 3, we will derive a characterization of 3-dimensional general $(\alpha,\,\beta)$ norms (Theorem \ref{Thm63}). We will show that the indicatrix of a 3-dimensional general $(\alpha,\,\beta)$ norm is an affine surface of revolution (Proposition \ref{prop61}) and vice versa. Based on Su's Theorem \ref{Thm62}, we will then find the characterization by studying the Darboux curves of the indicatrix.  In section 4, we will give new characterizations of Randers norms by proving a maximum property of Randers norms (see Theorem \ref{Thm52}), and also some integral inequalities on the indicatrix (see Theorem \ref{Thm53}-\ref{Thm54}). All the characterizations given in section 4 are global quantities of the indicatrix.

For convenience, we will mainly deal with in this paper an $(n+1)$-dimensional Minkowski space $(V^{n+1},\,F)$ equipped with its Busemann-Hausdorff volume form, since the results we obtained can be generalized to $(V^{n+1},\,F)$ equipped with other kinds of volume forms with no essential differences.

\section{Preliminaries}
\label{sec:2}
\subsection{The Minkowski norm}
\label{sec:21}
Let $V^{n+1}$ be an $(n+1)$-dimensional vecter space and $F$ a Minkoski norm on it. Denote by $M^n$ the indicatrix of $F$, i.e.
\begin{equation}
M^n:=\left\{y\in V^{n+1}\mid F(y)=1\right\}.
\end{equation}
It is well known that $M^n$ is an $n$-dimensional strictly convex hypersurface enclosing the origin O (cf. \cite{Bryant 2002}, \cite{Jeanne-Chri}). In a chosen coordinate of $V^{n+1}$, $M^n$ is uniquely determined by $F$ and vice versa (cf. \cite{GTM200}, \cite{Chern and Shen 2005}). Since $M^n$ is diffeomorphic to the standard unit sphere $S^{n}(1)$, we can parameterize $M^n$ by
\begin{equation}\label{eqpara}
\begin{aligned}
r&:\, S^n(1)\to M^n\left(\subset\ V^{n+1}\right)\\
r&=r\left(\theta^1,...,\theta^n\right)\\
&=\left(r^1,...,r^{n+1}\right),
\end{aligned}
\end{equation}
where $\theta^1,...,\theta^n$ are the $n$ angles of spherical coordinate system in $\mathbb{R}^{n+1}$. It follows that
\begin{equation}
r^A=r^A\left(\theta^1,...,\theta^n\right).
\end{equation}
Also, the unit Finsler ball with respect to $F$ is
\begin{equation}
B_F(1):=\left\{y\in V^{n+1}\mid F(y)<1\right\},
\end{equation}
which is a strictly convex domain in $\mathbb{R}^{n+1}$ with $\partial B_F(1)=M^n$.
Through out the paper, for $V^{n+1}$, let the capitalized Latin indices $A,\,B,\,C...$ run from $1$ to $n+1$, and the uncapitalized Latin indices $i,\,j,\,k...$ run from $1$ to $n$. For an arbitrary fixed affine coordinate $\{y^A\}_{A=1}^{n+1}$ in $V^{n+1}$, the fundamental tensor of $F$ is given by
\begin{equation}\label{eq21}
g_{AB}(y):={\frac{1}{2}}{\frac{\partial^2F^2}{\partial y^A \partial y^B}}(y).
\end{equation}
Differentiating $F^2(y)$ three times gives the Cartan torsion
\begin{equation}\label{eq22}
\mathbf A_{ABC}(y):={\frac{1}{4}}F{\frac{\partial^3F^2}{\partial y^A \partial y^B \partial y^C}}(y).
\end{equation}
The angular form of $F$ is
\begin{equation}\label{eq23}
h_{AB}(y):=g_{AB}(y)-{\frac{\partial F}{\partial y^A}}(y){\frac{\partial F}{\partial y^B}}(y).
\end{equation}
One can check that the restriction of $g(y)$ on $M^n$ is precisely $h(y)$. The Busemann-Hausdorff volume form, is given by
\begin{equation}\label{eq24}
dV_{B-H}:=\sigma_Fdy^1\wedge...\wedge dy^{n+1},
\end{equation}
where
\begin{equation}\label{eq25}
\sigma_F:={\frac{\omega_{n+1}}{Vol_{\mathbb{R}^{n+1}}\left(B_F(1)\right)}}.
\end{equation}
Here in (\ref{eq25}), we denote
\begin{equation}
Vol_{\mathbb{R}^{n+1}}\left(\Omega\right):=\int_{\Omega}1dy^1\wedge...\wedge y^{n+1}
\end{equation}
as the volume of a measurable set $\Omega\subset{\mathbb{R}^{n+1}}$, and $\omega_{n+1}={\frac{\pi^{\frac{n+1}{2}}}{\Gamma\left({\frac{n+3}{2}}\right)}}$ is the volume of the standard $(n+1)$-dimensional Euclidean unit ball. The distorsion $\tau$ of $F$ with respect to $dV_{B-H}$ is defined by
\begin{equation}\label{eq26}
\tau(y):=\log{\frac{\sqrt{\det\left(g_{AB}(y)\right)}}{\sigma_F}}.
\end{equation}
It is easy to check that the Cartan form
\begin{equation}\label{eq27}
\mathbf I(y):=\mathbf I_A(y)dy^A:=g^{BC}(y)\mathbf A_{ABC}(y)dy^A
\end{equation}
equals to $d\tau$, where $\left(g^{AB}\right)=\left(g_{AB}\right)^{-1}$.
The Matsumoto torsion defined in \cite{Matsumoto 1972} and \cite{Matsumoto 1978} is
\begin{equation}\label{eq28}
\mathbf M_{ABC}(y):=\mathbf A_{ABC}(y)-{\frac{1}{n+3}}\left(\mathbf I_A(y)h_{BC}(y)+\mathbf I_B(y)h_{CA}(y)+\mathbf I_C(y)h_{AB}(y)\right).
\end{equation}
Throughout this paper, by saying $\tilde F(y)$ is a Minkowski norm with navigation data $(F,U)$, we mean that $\tilde F(y)$ is the solution of the following navigation problem (cf. \cite{Mo and Huang 2007}):
\begin{equation}\label{eq29}
F\left({\frac{y}{\tilde F(y)}}+U\right)=1,
\end{equation}
where $U$ is a vector in $V^{n+1}$ satisfies $F(U)<1$. It is proved that $\tilde F$ is uniquely determined by (\ref{eq29}) and is a Minkowski norm on $V^{n+1}$ whenever $F(U)<1$ is held (cf. \cite{Chern and Shen 2005}, \cite{Mo and Huang 2007}). And $\tilde M^n$, the indicatrix of $\tilde F$, satisfies
\begin{equation}\label{eq210}
\tilde M^n=M^n-U
\end{equation}
as a point set.
\subsection{Centro-affine structure of the indicatrix}
\label{sec:22}
Throughout the paper, vectors are considered as column vectors. For $n+1$ vectors $\{v_A\}_{A=1}^{n+1}$ with $v_A=v_A^B{\frac{\partial}{\partial y^B}}$, we write simply $\left(v_1,...,v_{n+1}\right)$ in short of the $(n+1)\times(n+1)$ matrix
\begin{equation}
\left(v_A^B\right)=\left(
\begin{aligned}
&v_1^1,\,&...\,,&v_n^1\\
%&v_1^2,\,&...\,,&v_n^2\\
&...\,&...\,&...\\
&v_n^1,\,&...\,,&v_n^n\\
\end{aligned}
\right).
\end{equation}

Choose $\mathbf{L}=-y$ as the transversal field on $M^n$ and take the parametrization (\ref{eqpara}) of $M^n$, one gets the centro-affine immersion of $M^n$ into $V^{n+1}$ (\cite{Bryant 2002}, \cite{Mo-Huang 2010}, \cite{Noumizu-Sasaki}), the centro-affine fundamental form coincides precisely with the angular form:
\begin{equation}\label{eq211}
h=h_{ij}d\theta^i\otimes d\theta^j,\quad h_{ij}=h_{AB}{\frac{\partial r^A}{\partial \theta^i}}{\frac{\partial r^B}{\partial \theta^j}}.
\end{equation}
And by definition (cf. \cite{Noumizu-Sasaki} \S{II.1}) we have
\begin{equation}\label{59plus}
h_{ij}={\frac{-\det\left({\frac{\partial r}{\partial \theta^1}},...,{\frac{\partial r}{\partial \theta^n}},D_{{\frac{\partial r}{\partial \theta^i}}}{\frac{\partial r}{\partial \theta^j}}\right)}{\det\left({\frac{\partial r}{\partial \theta^1}},...,{\frac{\partial r}{\partial \theta^n}},r\right)}}.
\end{equation}
The coefficients of the centro-affine connection $\nabla^{(c)}$ on $TM^n$ is given by
\begin{equation}\label{eq212}
\Gamma^{(c)k}_{ij}={\frac{1}{2}}h^{kl}\left({\frac{\partial h_{lj}}{\partial \theta^i}}+{\frac{\partial h_{il}}{\partial \theta^j}}-{\frac{\partial h_{ij}}{\partial \theta^l}}\right)-{\frac{1}{2}}h^{kl}\mathbf A_{ijl},
\end{equation}
where $\mathbf A_{ijl}=\mathbf A_{ABC}{\frac{\partial r^A}{\partial \theta^i}}{\frac{\partial r^B}{\partial \theta^j}}{\frac{\partial r^C}{\partial \theta^l}}$ and $\left(h^{ij}\right)=\left(h_{ij}\right)^{-1}$. And the corresponding cubic form $\nabla^{(c)} h$ is just the Cartan torsion $\mathbf A(\theta)=\mathbf A_{ijk}\left(r(\theta)\right)d\theta^i\otimes d\theta^j\otimes d\theta^k$, which is fully symmetric in $i,\,j,\,k$. For the centro-affine immersion of $M^n$, the shape operator is always $s^{(c)}=Id$ on $TM^n$. By (\ref{eq211}) and the fact that $g_{AB}y^Ay^B=F^2(y)$, we have for $\forall y\left(=r(\theta)\right)\in M^n$
\begin{equation}\label{eq311}
\left(\begin{aligned}(&h_{ij})_{n\times n}&0\\&0&1\end{aligned}\right)=
\left({\frac{\partial r}{\partial \theta^1}},...,{\frac{\partial r}{\partial \theta^n}},r\right)^T\left(g_{AB}\right)\left({\frac{\partial r}{\partial \theta^1}},...,{\frac{\partial r}{\partial \theta^n}},r\right).
\end{equation}
\subsection{The equiaffine structure of the indicatrix}
\label{sec:23}
Fix a volume form on the ambient affine space $V^{n+1}$, one can define the corresponding equiaffine structure (or Blaschke structure in other words) of a immersed hypersurface. For basic properties of the equiaffine structure of hypersurfaces, one can refer to Blaschke \cite{Blaschke ADG}, Su \cite{Su ADG} \S{I.5}, Nomizu-Sasaki \cite{Noumizu-Sasaki},etc. In the following, we just list some basic equiaffine properties of the indicatrix (cf. \cite{Mo-Huang 2010}).

The Blaschke metric of $M^n$ is given by
\begin{equation}\label{eq213}
G=G_{ij}d\theta^i\otimes d\theta^j,\,G_{ij}=\left[{\frac{\sigma_F^2}{\det\left(g_{AB}\right)}}\right]^{\frac{1}{n+2}}h_{ij}.
\end{equation}
While the affine norm field is given by
\begin{equation}\label{eq214}
\begin{aligned}
\xi(y):&={\frac{1}{n}}\triangle_{G}y\\
&=-\left[{\frac{\sigma_F^2}{\det\left(g_{AB}\right)}}\right]^{\frac{1}{n+2}}\left(y^A+h^{AB}\mathbf I_B(y)\right){\frac{\partial}{\partial y^A}}
\end{aligned}
\end{equation}
for $\forall y\in M^n$, where $\triangle_{G}$ is the Beltrami-Laplacian of the Blaschke metric $G$.
Also, the cubic form at $y\in M^n$ induced by the affine norm field $\xi$ is
\begin{equation}\label{eq215}
\mathbf C(y)=2\left[{\frac{\sigma_F^2}{\det\left(g_{AB}\right)}}\right]^{-{\frac{1}{n+2}}}\mathbf M(y),
\end{equation}
where $\mathbf M(y)$ is the Matsumoto torsion defined in (\ref{eq28}). In the case of $n=2$, we recall the definition of the Darbous curve (cf. \cite{Su ADG}, \S{I.5}):
\begin{Definition}\label{Darboux}
A curve $\gamma\subset M^2$ is called a \emph{Darboux curve} if $\gamma$ is the integral curve of the mull direction of $\mathbf{C}$.
\end{Definition}
The shape operator with respect to the equiaffine structrue is
\begin{equation}\label{eq216}
s_i^j=\left(\delta_i^j+{\frac{2}{n+2}}h^{jk}\mathbf I_{i;k}-{\frac{2n}{(n+2)^2}}h^{jk}\mathbf I_i\mathbf I_k\right),
\end{equation}
where $\mathbf I_i=\mathbf I_A{\frac{\partial r^A}{\partial \theta^i}}$ and ";" is the covariant derivative with respect to the Levi-Civita connection of the angular form $h$.
Finally, let $\{\lambda_1,\,...\,,\lambda_n\}$ be $n$ eigenvalues of $s_i^j$, they are the affine principle curvatures of $M^n$. For $1\leq k\leq n$, we define here
\begin{equation}\label{eq217}
L_k:={\frac{k!(n-k)!}{n!}}\sum_{i_1<...<i_k}\lambda_{i_1}...\lambda_{i_k}
\end{equation}
the k-th affine mean curvature of $M^n$.
\\
\section{Characterization of 3-dimensional general $(\alpha,\,\beta)$ norms}
\label{sec:3}
In this section, we will give a characterization of 3-dimensional general $(\alpha,\,\beta)$ norms by studying the induced norm on a certain 2-dimensional subspace.

In Yu-Zhu\cite{Yu-Zhu DGA}, the general $(\alpha,\,\beta)$ norm is investigated. Roughly speaking, it can be derived from an $(\alpha,\,\beta)$ norm by solving the navigation problem (\ref{eq29}). Hence its indicatrix can be derived by shifting the indicatrix of an $(\alpha,\,\beta)$ norm. In \cite{Yu-Zhu DGA}, Yu-Zhu proved that in an $\alpha$-orthogonal coordinate, the indicatrix of an $(\alpha,\,\beta)$ norm is a hypersurface of revolution whose axis passes through the origin. Further, the indicatrix of a general $(\alpha,\,\beta)$ norm is a hypersurface with its axis possibly does not pass the origin.
\begin{Theorem}\label{Thm61a}
(Theorem 2.2 in \cite{Yu-Zhu DGA})
Let $F$ be a Minkowski norm on a vector space $V$ of dimension $n\geq2$. Then $F$ is an $(\alpha,\,\beta)$ norm if and only if $F$ is $G$- invariant, where
$$G=\left\{g\in GL(n,\mathbb{R})\mid g=diag(A,1),\quad A\in O(n-1)\right\}.$$
\end{Theorem}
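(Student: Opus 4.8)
The plan is to prove both implications by exploiting the fact that the orbits of $G$ are exactly the ``circles of latitude'' $\{y : |y'| = \mathrm{const},\ y^n = \mathrm{const}\}$, where I write $y = (y', y^n)$ with $y' = (y^1, \ldots, y^{n-1})$, so that a $G$-invariant function is precisely a function of the two quantities $r := |y'| = \sqrt{(y^1)^2 + \cdots + (y^{n-1})^2}$ and $s := y^n$. Throughout, the statement should be read up to a linear change of coordinates: ``$F$ is $G$-invariant'' means that there is a basis of $V$ in which $F \circ g = F$ for every $g \in G$. This is the natural reading, since the data $(\alpha, \beta)$ single out both a Euclidean structure and a distinguished direction, and an $(\alpha,\beta)$ norm in arbitrary coordinates need not be invariant under this particular $G$.

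For the forward implication, suppose $F = \alpha\,\phi(\beta/\alpha)$. I would choose an $\alpha$-orthonormal basis $\{e_1, \ldots, e_n\}$ whose last vector is aligned with the $\alpha$-dual of $\beta$ (the case $\beta = 0$ being trivially invariant under all of $O(n)$); in the resulting coordinates $\alpha(y) = |y| = \sqrt{\sum_A (y^A)^2}$ and $\beta(y) = b\,y^n$ with $b = \|\beta\|_\alpha$. For $g = \mathrm{diag}(A,1) \in G$ with $A \in O(n-1)$ one has $|gy| = |y|$ because $A$ is orthogonal and $(gy)^n = y^n$ because the last coordinate is fixed; hence $\alpha(gy) = \alpha(y)$, $\beta(gy) = \beta(y)$, and therefore $F(gy) = F(y)$. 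This direction is essentially immediate.

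For the converse, assume $F$ is $G$-invariant in some linear coordinates. By the orbit description above, $F(y) = f(r,s)$ for a single function $f$, and since $F$ is positively $1$-homogeneous, so is $f$: $f(\lambda r, \lambda s) = \lambda f(r,s)$ for $\lambda > 0$. I would then set $\alpha(y) = |y| = \sqrt{r^2 + s^2}$, $\beta(y) = y^n = s$, and define $\phi(t) := f(\sqrt{1 - t^2},\, t)$ for $t \in [-1,1]$. Homogeneity gives $f(r,s) = \sqrt{r^2 + s^2}\,f\big(r/\sqrt{r^2+s^2},\, s/\sqrt{r^2+s^2}\big) = \alpha\,\phi(\beta/\alpha)$, exhibiting $F$ in $(\alpha,\beta)$ form. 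To see $\phi$ is smooth, including on the axis $r = 0$ where $t = \pm 1$, I would invoke the regularity of smooth $O(n-1)$-invariant functions: since $F$ is smooth and invariant under rotations of $y'$, it is a smooth function of $|y'|^2$ and $y^n$, i.e.\ $f(r,s) = \tilde f(r^2, s)$ with $\tilde f$ smooth, whence $\phi(t) = \tilde f(1 - t^2, t)$ is smooth on $[-1,1]$ (and extendable to an open interval by a Whitney/Seeley extension). Finally, the structural inequalities that $\phi$ must satisfy for $F$ to be a Minkowski norm are automatic here, since $F$ is assumed to be one: the positive definiteness of $g_{AB}$ translates, via the standard expression of the fundamental tensor of an $(\alpha,\beta)$ norm, into precisely those inequalities.

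The main obstacle is the converse direction's regularity at the rotation axis: away from $r = 0$ everything is an elementary manipulation of homogeneous functions, but smoothness of $\phi$ at $t = \pm 1$ genuinely uses that a smooth rotation-invariant function descends to a smooth function of the squared radial variable. A secondary point, more bookkeeping than obstacle, is to confirm that the convexity conditions on $\phi$ are equivalent to the strong convexity of $F$, so that the norm produced is an admissible $(\alpha,\beta)$ norm in the sense of the definition rather than merely formally of the shape $\alpha\phi(\beta/\alpha)$.
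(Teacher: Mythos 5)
The paper does not actually prove this statement: it is quoted verbatim from Yu--Zhu (Theorem 2.2 of \cite{Yu-Zhu DGA}), and the paper's very next paragraph criticizes it as confusing ``because of the uncertainty of coordinates of $V$, as well as the uncertainty of the group action'' --- precisely the defect you flagged before starting your argument. Your fix (read ``$G$-invariant'' as ``$G$-invariant in \emph{some} linear coordinate system'') and the paper's fix (restate the result as Theorem \ref{Thm61}, with the coordinates fixed in advance and $\alpha(y)=\sqrt{\sum_A(y^A)^2}$, $\beta(y)=by^n$ prescribed in those coordinates) are two consistent readings of the same underlying fact, and your argument is essentially the one the paper attributes to Yu--Zhu's proof of sufficiency: the $G$-orbits are the sets $\left\{\,|y'|=\mathrm{const},\ y^n=\mathrm{const}\,\right\}$, so a $G$-invariant positively $1$-homogeneous $F$ must take the form $\alpha\,\phi(\beta/\alpha)$ with $\alpha=|y|$, $\beta=y^n$, $\phi(t)=f\left(\sqrt{1-t^2},\,t\right)$. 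The two technical points you isolate are the right ones and are handled correctly: smoothness of $\phi$ at $t=\pm1$ follows from the fact that a smooth $O(n-1)$-invariant function of $y'$ is a smooth function of $|y'|^2$ (applicable here because the axis points of the indicatrix lie away from the origin, where $F$ is smooth), and the admissibility inequalities on $\phi$ are equivalent to the strong convexity of $F$, hence hold by hypothesis. I see no gap. The one point worth keeping explicit is that with $\beta=y^n$ one has $\|\beta\|_\alpha=1$ while $\beta/\alpha$ attains the values $\pm1$ on the axis, so if your definition of an $(\alpha,\beta)$ norm requires $\phi$ to be smooth on an open interval strictly containing $[-\|\beta\|_\alpha,\|\beta\|_\alpha]$, the extension step you mention is genuinely needed rather than cosmetic.
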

Theorem \ref{Thm61a} is confusing, because of the uncertainty of coordinates of V, as well as the uncertainty of the group action. Yu-Zhu's proof of the above theorem further showed that if $F$ is $G$-invariant, then $\alpha$ is expressed by
\begin{equation}
\alpha(y)=\sqrt{\sum_{A=1}^n(y^A)^2}
\end{equation}
and $\beta$ by $\beta(y)=by^n$ for some constant $b\in \mathbb{R}$. So their theorem actually only works for $\alpha$-orthogonal coordinates which can not be chosen a priori  in the proof of sufficiency. At least, the proof in \cite{Yu-Zhu DGA} leads to
\begin{Theorem}\label{Thm61}
Let $F$ be a Minkowski norm on a vector space $V$ of dimension $n\geq2$ with $\{y^i\}_{i=1}^n$ a fixed coordinate. Then $F$ is an $(\alpha,\,\beta)$ norm expressed by
\begin{equation}
\begin{aligned}
&F(y)=\alpha\phi\left({\frac{\beta}{\alpha}}\right),\\
&\alpha(y)=\sqrt{\sum_{A=1}^n(y^A)^2},\\
&\beta(y)=by^n,\quad b\in \mathbb{R}
\end{aligned}
\end{equation}
if and only if $F$ is $G$- invariant, where
$$G=\left\{g\in GL(n,\mathbb{R})\mid g=diag(A,1),\quad A\in O(n-1)\right\}.$$
\end{Theorem}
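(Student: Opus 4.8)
The plan is to prove the two implications separately; the forward implication (the stated form $\Rightarrow$ $G$-invariance) is a one-line substitution, while the converse carries all of the content.

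\emph{The form implies $G$-invariance.} Suppose $F(y)=\alpha(y)\,\phi\!\left(\beta(y)/\alpha(y)\right)$ with $\alpha(y)=\sqrt{\sum_{A=1}^{n}(y^{A})^{2}}$ and $\beta(y)=by^{n}$. Write $y=(\bar y,y^{n})$ with $\bar y=(y^{1},\dots,y^{n-1})$, and let $g=\mathrm{diag}(A,1)\in G$ with $A\in O(n-1)$, so that $gy=(A\bar y,\,y^{n})$. Since $A$ is orthogonal, $\alpha(gy)=\sqrt{|A\bar y|^{2}+(y^{n})^{2}}=\sqrt{|\bar y|^{2}+(y^{n})^{2}}=\alpha(y)$, while $\beta(gy)=by^{n}=\beta(y)$ because the last coordinate is untouched. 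Hence $F(gy)=F(y)$, i.e.\ $F$ is $G$-invariant. No regularity hypothesis is used here.

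\emph{$G$-invariance implies the form.} Here I would first use the orbit structure of $G$. Since $G$ acts as $O(n-1)$ on $(y^{1},\dots,y^{n-1})$ and fixes $y^{n}$, the orbit of $y$ is determined precisely by the pair $(\rho,y^{n})$, where $\rho:=\sqrt{\sum_{A=1}^{n-1}(y^{A})^{2}}$. By the classical description of orthogonally invariant functions, $G$-invariance of $F$ is equivalent to $F(y)=f(\rho,y^{n})$ for some $f$; moreover, smoothness of $F$ off the origin combined with the $O(n-1)$-invariance yields a smooth $g$ with $f(\rho,y^{n})=g(\rho^{2},y^{n})$, smooth up to $\rho=0$. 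Invoking positive $1$-homogeneity, $f(\lambda\rho,\lambda t)=\lambda f(\rho,t)$ for $\lambda>0$; setting $\alpha=\sqrt{\rho^{2}+t^{2}}$ and choosing $\lambda=1/\alpha$ collapses $f$ to one variable, giving $F(y)=\alpha\,\phi_{0}(y^{n}/\alpha)$ with $\phi_{0}(s):=f(\sqrt{1-s^{2}},s)=g(1-s^{2},s)$, $s\in[-1,1]$. Taking $\beta=y^{n}$ (that is, $b=1$), or more generally $\beta=by^{n}$ with $\phi(u):=\phi_{0}(u/b)$, exhibits $F$ in the asserted form, with $\alpha$ the stated Euclidean norm and $s=\beta/\alpha$.

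The remaining task, which I expect to be the only genuine difficulty, is to confirm that $\phi$ is an admissible defining function. Positivity of $\phi_{0}$ is immediate from $F>0$ along the slice $s\mapsto\sqrt{1-s^{2}}\,e_{1}+s\,e_{n}$ (here $e_{1},e_{n}$ denote coordinate basis vectors), and smoothness of $\phi_{0}$ on $(-1,1)$ follows from smoothness of $F$ wherever $\rho>0$. The delicate point is the endpoints $s=\pm1$, which correspond to points on the $y^{n}$-axis: there the parametrization $s\mapsto\sqrt{1-s^{2}}$ is singular, yet the representation $\phi_{0}(s)=g(1-s^{2},s)$ with $g$ jointly smooth up to $\rho^{2}=0$ shows that $\phi_{0}$ is $C^{\infty}$ up to $s=\pm1$, and therefore extends to a smooth function on an open interval $(-b_{0},b_{0})$ with $b_{0}>b$, as the definition of an $(\alpha,\beta)$ norm requires. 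Finally, since $F$ is assumed to be a Minkowski norm, its fundamental tensor (\ref{eq21}) is positive-definite; by the standard regularity criterion for $(\alpha,\beta)$ norms (cf.\ \cite{Chern and Shen 2005}, pp.5--7) this is exactly equivalent to the convexity inequalities on $\phi$, so no separate verification is needed. This completes the reverse implication, and hence the proof.
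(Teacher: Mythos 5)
Your proof is correct. The paper does not actually prove this statement---it is extracted from the proof of Theorem 2.2 in Yu--Zhu \cite{Yu-Zhu DGA}---and your argument (describing the $G$-orbits by $(\rho,y^{n})$, using positive $1$-homogeneity to reduce to the single variable $s=y^{n}/\alpha$, and invoking the smooth-invariant-function theorem to get regularity of $\phi$ across the $y^{n}$-axis, i.e.\ at $s=\pm1$) is essentially that standard proof, with somewhat more care taken at the endpoints.
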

In order to give a characterization of the general $(\alpha,\,\beta)$ norms independent of the choice of coordinates, let's first check the 3-dimensional case. In this case the indicatrix of the Minkowski is a surface. The following affine description of the indicatrix is crucial in this section:

\begin{Proposition}\label{prop61}
Let $V^3$ be a $3$-dimensional Minkowski space and $\bar{F}$ a Minkowski norm on it. Let $\bar{M}^2$ be the indicatrix of $\bar{F}$ in $V^3$, then $\bar{F}$ is a general $(\alpha,\,\beta)$ norm if and only if $\bar{M}^2$ is an affine surface of revolution.
\end{Proposition}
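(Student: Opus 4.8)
The plan is to translate both sides of the equivalence into statements about the symmetry group of the hypersurface $\bar{M}^2$ and then match them. By definition a general $(\alpha,\beta)$ norm is the solution $\bar{F}$ of the navigation problem (\ref{eq29}) with data $(F,U)$, where $F$ is an $(\alpha,\beta)$ norm; by (\ref{eq210}) its indicatrix is $\bar{M}^2 = M^2 - U$, where $M^2$ is the indicatrix of $F$. Using Theorem \ref{Thm61} (Yu--Zhu), after choosing an $\alpha$-orthonormal coordinate $M^2$ is a \emph{Euclidean} surface of revolution whose axis is the $\beta$-axis through the origin $O$; equivalently, $M^2$ is invariant under the one-parameter group $R_t\subset SO(\alpha)$ of $\alpha$-rotations about that axis. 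I would take as the working definition that $\bar{M}^2$ is an \emph{affine surface of revolution} when it is invariant under a one-parameter group of equiaffine (unimodular affine) transformations of $V^3$ fixing a line; this is the coordinate-free notion at which the two directions will meet.

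For the forward implication I would observe that each $\alpha$-rotation $R_t$ is $\alpha$-orthogonal, hence $\det R_t=\pm1$ as an endomorphism of $V^3$, so it preserves every volume form on $V^3$, in particular $dV_{B-H}$, and is therefore equiaffine. Conjugating $\{R_t\}$ by the translation $x\mapsto x-U$ produces the group $g_t\colon y\mapsto R_t(y+U)-U$, which has linear part $R_t$ (equiaffine, since translations are unimodular), fixes the translated axis, and preserves $\bar{M}^2=M^2-U$ because $y\in\bar{M}^2$ gives $y+U\in M^2$, hence $R_t(y+U)\in M^2$ and $g_t(y)\in M^2-U=\bar{M}^2$. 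Thus $\bar{M}^2$ is an affine surface of revolution.

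For the converse I would start from a one-parameter equiaffine group $H=\{h_t\}$, $h_t(x)=A_tx+c_t$, leaving $\bar{M}^2$ invariant and fixing a line $\ell$. First I would show the linear parts $\{A_t\}$ are relatively compact: for $x,x'\in\bar{M}^2$ one has $A_t(x-x')=h_t(x)-h_t(x')\in \bar{M}^2-\bar{M}^2$, a bounded set, and differences $x-x'$ span $V^3$ since $\bar{M}^2$ bounds a convex body with nonempty interior, so $\{A_t\}$ is bounded. Averaging any inner product over the compact closure of $\{A_t\}$ yields a Euclidean norm $\alpha$ with $A_t\in SO(\alpha)$; since the $A_t$ fix the direction of $\ell$, they are exactly $\alpha$-rotations about $\ell$. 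Next, since $h_t$ preserves the convex body $B_{\bar{F}}(1)$ bounded by $\bar{M}^2$, it carries its centroid $p$ to the centroid of the image, so $p$ is $H$-fixed; as the fixed-point set of $H$ is $\ell$, we get $p\in \ell\cap\operatorname{int}B_{\bar{F}}(1)$. Translating by $p$, the surface $M^2:=\bar{M}^2-p$ is then an $\alpha$-surface of revolution about an axis through $O$ that still encloses $O$, hence by the correspondence underlying Theorem \ref{Thm61} it is the indicatrix of an $(\alpha,\beta)$ norm $F$. Setting $U=-p$ gives $\bar{M}^2=M^2-U$, and since $0\in\operatorname{int}B_{\bar{F}}(1)$ translates into $F(U)<1$, the norm $\bar{F}$ is precisely the navigation solution of $(F,U)$, i.e. a general $(\alpha,\beta)$ norm.

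I expect the converse to be the main obstacle, in two places. First, rigorously identifying the abstract equiaffine symmetry group with a group of $\alpha$-rotations: this is where the relative compactness of $\{A_t\}$ and the averaging construction of $\alpha$ do the real work, together with the observation that a one-parameter unimodular group preserving a bounded convex body must be of elliptic (rotational) type rather than hyperbolic or parabolic, so that a genuine axis and closed orbits exist. Second, the converse half of the surface-of-revolution/$(\alpha,\beta)$ correspondence, where one must check that the profile curve of a strictly convex $\alpha$-surface of revolution enclosing $O$ yields an admissible $\phi$, i.e. that the convexity and positivity conditions defining a Minkowski norm are met. Locating the axis through the interior via the centroid is the clean device that legitimizes the translation step and guarantees $F(U)<1$.
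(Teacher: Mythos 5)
Your argument is internally coherent and, for the forward direction and for the translation-and-navigation bookkeeping, it is sound; in particular the centroid device is a cleaner way than the paper's to locate the axis point inside the convex body and to certify $F(U)<1$. The boundedness argument for the linear parts $\{A_t\}$ (via $A_t(x-x')=h_t(x)-h_t(x')\in\bar M^2-\bar M^2$ and the fact that $\bar M^2-\bar M^2$ is a bounded set containing a ball about the origin) and the averaging construction of an invariant inner product are standard and correct, and they do force the group to be of elliptic type. Two small points you should still nail down: the group must be assumed nontrivial (otherwise every surface would qualify), and you use only $SO(2)$-invariance where Theorem \ref{Thm61} is stated with $O(2)$; for $n=3$ this is harmless since an $SO(2)$-invariant function of two variables is automatically $O(2)$-invariant.

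The genuine gap is definitional, and it matters for how the proposition is used later. You \emph{declare} that an affine surface of revolution means a surface invariant under a one-parameter equiaffine group fixing a line, but the paper's term refers to the classical notion of S\"u{\ss} and Su (\cite{Suss}, \cite{Su ADG} \S{IV.2}), defined through the behaviour of the affine normals and affine lines of curvature; it is this notion that feeds into Theorem \ref{Thm62} on Darboux curves and hence into Theorem \ref{Thm63}. The two notions do coincide for compact strictly convex surfaces, but the implication ``S\"u{\ss}-type affine surface of revolution $\Rightarrow$ invariant under a one-parameter equiaffine group'' is exactly the content of Su's classification into Types (I)--(III) together with the explicit normal form (\ref{61}), which is what the paper's proof of the converse invokes and what your proof silently bypasses by changing the definition. (Symmetrically, in the forward direction you establish group invariance but not the S\"u{\ss} property; that step needs the observation that the reflection symmetries of a Euclidean surface of revolution force the affine normals to lie in the meridian planes, or again an appeal to the classification.) If you add the equivalence of the two definitions for ovaloids -- say by quoting Su's classification for one direction and the reflection argument for the other -- your proof becomes a complete and genuinely more elementary alternative to the paper's; without it, you have proved a characterization of a formally different class of surfaces.
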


\begin{proof}
$(\Longrightarrow)$ By definition, for some suitable vector $v\in V^3$, $\bar{M}^2+v$ is the indicatrix of some $(\alpha,\,\beta)$ norm. Now it is obviously seen by Theorem \ref{Thm61} that $\bar{M}^2+v$ is an affine surface of revolution, hence for $\bar{M}^2$.\\
$(\Longleftarrow)$ Let $\{\bar{y}^A\}$ be the original coordinate of $V^3$. It was proved in \S{IV.2} of Su \cite{Su ADG} that there are only three types of affine surfaces of revolution, namely parabolic (Type (I)), elliptic (Type (II)) and hyperbolic (Type (III)). As the indicatrix $\bar{M}^2$ is a compact surface, it must be of Type (II), i.e. one branch of the affine curvature curves are parallel curves and they are ellipses. These ellipses also coincide with one branch of Darboux curves. Moreover, in \cite{Su ADG}(pp.120-122), it is proved that for any affine surface of revolution of elliptic type, one can choose an affine coordinate $\{{y^*}^A\}$ (which probably changes the origin of $V^3$) such that the surface $\bar M^2$ is given by
\begin{equation}\label{61}
\left\{
    \begin{aligned}
    {y^*}^1(\theta^1,\,\theta^2)&=\theta^2\\
    {y^*}^2(\theta^1,\,\theta^2)&={\frac{a_2}{a_1}}\exp(-\int(\psi(\theta^2)-\theta^2)^{-1}d\theta^2)\cos(\kappa \theta^1)\\
   {{y^*}^3}(\theta^1,\,\theta^2)&={\frac{a_3}{a_1}}\exp(-\int(\psi(\theta^2)-\theta^2)^{-1}d\theta^2)\sin(\kappa \theta^1)
   \end{aligned}
    \right.
\end{equation}
where $a_1,\,a_2,\,a_3$ and $\kappa>0$ are constants and $\psi:\mathbb{R}\to\mathbb{R}$ is a function. By a translating along the ${y^*}^1$-direction, one can assume that $O^*$, the origin of the coordinate system $\{{y^*}^A\}_{A=1}^3$, is enclosed by $\bar M^n$.

Let
\begin{equation}
\tilde y^1={y^*}^1,\quad \tilde y^2={\frac{{y^*}^2}{a_2}},\quad \tilde y^3={\frac{{y^*}^3}{a_3}},
\end{equation}
now Theorem \ref{Thm61} applies in the affine coordinate $\{\tilde{y}^A\}$. Choose $\{\tilde{y}^A\}$ as the coordinate of $V^3$, and let $\tilde F(\tilde y)$ be the Minkowski norm on $V^3$ whose indicatrix is $\bar M^2$, then Theorem \ref{Thm61} implies that $\tilde{F}(\tilde y)$ is an $(\alpha,\,\beta)$ norm. Let the origin of the coordinate $\{\tilde y^A\}$ be $\tilde O$ and that of $\{\bar{y^A}\}$ be $O$, then $\bar F$ is a Minkowski norm with navigation data $(\tilde F, \overrightarrow{\tilde OO})$. Hence $\bar F$ is a general $(\alpha,\,\beta)$ norm.
\end{proof}

Su had showed in his pioneering works that
\begin{Theorem}\label{Thm62}
(Theorem 22 of \cite{Su Jap 2}, Theorem 36 of \cite{Su Jap 6})
One branch of the Darboux curves of a surface lies on parallel planes if and only if the surface is an affine surface of revolution or an affine sphere of Type (I), (II) or (III) defined in \cite{Su Jap 6}(also, see \cite{Su ADG}, pp.120-122).
\end{Theorem}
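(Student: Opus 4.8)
The plan is to prove the two implications separately, using the Blaschke structure equations of a surface $M^2\subset V^3$ (cf. \cite{Noumizu-Sasaki}): the affine metric $G$, the Fubini--Pick cubic form $\mathbf C$ (fully symmetric and apolar to $G$, i.e. $G^{ij}\mathbf C_{ijk}=0$), the affine shape operator $S$, and the affine normal $\xi$, tied together by the Gauss equation $D_XY=\nabla_XY+G(X,Y)\xi$ and the Weingarten equation $D_X\xi=-S(X)$. At each point $\mathbf C$ restricts to a binary cubic on the tangent plane, whose (up to three) real roots are the null directions, and a \emph{branch} of Darboux curves is the family of integral curves of one such direction field. Throughout I write the position vector as $r(u,v)$ in a chart adapted to the branch in question, so that the $v$-curves $\{u=\mathrm{const}\}$ are the selected Darboux curves; the clean dichotomy I will exploit is that the Darboux condition gives $\mathbf C(\partial_v,\partial_v,\partial_v)=0$ for free, while all further structure must be extracted from the planarity hypothesis.

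For sufficiency, suppose $M^2$ is an affine surface of revolution. Then it carries a one-parameter group $\{g_t\}$ of equiaffine transformations of $V^3$ whose orbits (the parallels) are plane conics lying in a fixed family of parallel planes $\{z=\mathrm{const}\}$, and each half-plane through the axis induces an equiaffine reflection $\rho$ that preserves $M^2$ and fixes the corresponding meridian. Taking $u$ along a meridian and $v$ along the parallels, invariance of the Blaschke structure under $\{g_t\}$ makes $G_{ij},\mathbf C_{ijk},S_i{}^j$ functions of $u$ alone, while invariance under $\rho$ (acting as $v\mapsto-v$) forces every component of $\mathbf C$ with an odd number of $v$-indices to vanish; in particular $\mathbf C_{uuv}=\mathbf C_{vvv}=0$. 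Hence $\partial_v$ is a null direction of $\mathbf C$, the parallels form one branch of Darboux curves, and by construction they lie on the parallel planes $\{z=\mathrm{const}\}$. The affine spheres of Type (I)--(III) are handled by the same symmetry argument applied to their explicit parametrizations in Su \cite{Su ADG} (pp.~120--122).

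For necessity, assume one branch of Darboux curves lies on parallel planes, normalized to $\{z=\mathrm{const}\}$ for an affine coordinate $z$; after reparametrizing I may take $z=z(u)$, so $z_v\equiv0$. Planarity then says that along each $v$-curve both $r_v$ and $r_{vv}$ stay in a fixed $z$-plane, which through the Gauss equation pins down the $z$-components of $\xi$ and of $S(\partial_v)$. Feeding this together with the Darboux condition $\mathbf C(\partial_v,\partial_v,\partial_v)=0$ and apolarity into the Gauss--Codazzi--Ricci integrability equations should force, in a correctly chosen gauge for $v$, both $\mathbf C_{uuv}=0$ and the $v$-independence of $G_{ij},\mathbf C_{ijk},S_i{}^j$ — that is, it reconstructs exactly the functional form found in the sufficiency step. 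By the fundamental (existence--uniqueness) theorem of affine surface theory, $G$ and $\mathbf C$ determine $M^2$ up to an equiaffine transformation of $V^3$, so $M^2$ is equiaffinely congruent to an affine surface of revolution. The exceptional locus where $S$ is everywhere scalar (so no axis is singled out) is precisely where the argument degenerates, and there $M^2$ is an affine sphere, which Su's analysis separates into Types (I), (II), (III).

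The main obstacle is the necessity direction, specifically the passage from the pointwise data (planarity of the foliation together with the null-direction equation) to the global conclusion. In the sufficiency proof the vanishing $\mathbf C_{uuv}=0$ and the $v$-independence of all invariants are \emph{consequences} of an assumed symmetry; in the necessity proof these must instead be \emph{derived} from planarity via the structure equations, and this is the step that genuinely uses the integrability conditions rather than formal manipulation. Isolating the degenerate affine-sphere cases, where the symmetry argument fails to produce an axis and the conclusion has to be read off from the explicit Type (I)--(III) normal forms, is the second delicate point, and it is exactly what makes the statement a disjunction rather than a clean equivalence with surfaces of revolution alone.
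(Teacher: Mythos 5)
The paper does not prove this statement at all --- it is imported verbatim from Su's 1928--1929 papers as a known classical result --- so there is no internal argument to compare yours against. Judged on its own, your sufficiency direction is essentially sound: with the group-theoretic definition of an affine surface of revolution, invariance under the one-parameter group together with the induced reflection $v\mapsto -v$ kills $\mathbf C_{vvv}$ and $\mathbf C_{uuv}$, and apolarity then exhibits $\partial_v$ as a null direction of the cubic form, so the planar, mutually parallel orbits form a branch of Darboux curves. (One slip: the explicit parametrizations on pp.~120--122 of Su's book are those of the three types of affine \emph{surfaces of revolution}, not of the affine \emph{spheres} of Types (I)--(III) from the 1929 paper; the two classifications share Roman numerals but are different lists, and the affine-sphere cases need their own verification.)

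The necessity direction, however, contains a genuine gap, and it is the entire content of the theorem. The sentence ``feeding this into the Gauss--Codazzi--Ricci integrability equations \emph{should} force $\mathbf C_{uuv}=0$ and the $v$-independence of $G_{ij},\mathbf C_{ijk},S_i{}^j$'' is exactly the step that must be carried out rather than asserted: planarity of the $v$-curves plus $\mathbf C(\partial_v,\partial_v,\partial_v)=0$ is pointwise first- and second-order data, whereas $v$-independence of the full Blaschke apparatus is the desired conclusion, and your sketch produces no first integrals from the Codazzi equations to bridge the two. Moreover, the degenerate case is handled incorrectly as stated: when no axis is singled out you conclude that ``$M^2$ is an affine sphere, which Su's analysis separates into Types (I), (II), (III),'' but being an affine sphere is far from sufficient --- a generic affine sphere has no branch of planar Darboux curves in parallel planes, and the theorem's second disjunct is precisely the short list of those particular affine spheres that do. Identifying that list is itself a nontrivial integration of the structure equations (it is what occupies Su's Theorem 36), so the exceptional case cannot be discharged by appealing to the general classification of affine spheres. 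As it stands, the proposal is a workable plan for sufficiency and an outline of where the difficulty lies for necessity, not a proof.
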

Now we are going to character 3-dimensional general $(\alpha,\,\beta)$ norms by using Theorem \ref{Thm62}, as mentioned in Proposition \ref{prop61}, Type (I) and (III) are excluded in our case. First, let's find out that how can a planer curve on $\bar{M}^2$ be its Darboux curve.
Suppose $W^2$ is a 2-dimensional subspace of $V^3$, then $\bar{F}$ induces a Minkowski norm $F$ on $W^2$. From now on, denote objects with respect to $\bar F$ by adding a bar, and corresponding objects of $F$ without it. Without loss of generality, one can assume that
\begin{equation}
\begin{aligned}
W^2&=Span\left\{{\frac{\partial}{\partial y^1}},\,{\frac{\partial}{\partial y^2}}\right\}\\
V^3&=Span\left\{{\frac{\partial}{\partial y^1}},\,{\frac{\partial}{\partial y^2}},\,{\frac{\partial}{\partial y^3}}\right\}.
\end{aligned}
\end{equation}
By definition, the indicatrix of $F$, denoted by $M^1$, is
\begin{equation}\label{eq64}
M^1=\bar{M}^2\cap W^2.
\end{equation}
which is a strongly convex closed curve lies in the 2-plane $V$.
The fundamental tensor of $F$ is
\begin{equation}\label{eq61}
g_{ij}={\frac{1}{2}}{\frac{\partial^2F}{\partial y^i\partial y^j}}={\frac{1}{2}}{\frac{\partial^2\bar F}{\partial y^i\partial y^j}}=\bar g_{ij},
\end{equation}
the angular form of $F$ is
\begin{equation}\label{eq62}
h_{ij}=g_{ij}-{\frac{\partial F}{\partial y^i}}{\frac{\partial F}{\partial y^j}}=\bar h_{ij},
\end{equation}
and the Cartan tensor of $F$ is
\begin{equation}\label{eq63}
\mathbf A_{ijk}=F{\frac{1}{2}}{\frac{\partial g_{ij}}{\partial y^k}}=\bar F{\frac{1}{2}}{\frac{\partial \bar g_{ij}}{\partial y^k}}=\bar{\mathbf A}_{ijk},
\end{equation}
where $1\leq i,\,j,\,k\leq2$.

Nearby $M^1$, one can choose a local coordinate $(\theta^1,\,\theta^2)$ of $\bar M^2$ such that $M^1$ is represented by $\theta^2=0$. This can be done since (i) $M^1\subset W^2$ is diffeomorphic to the standard circle $S^1$, so it can be parameterized as
\begin{equation}
\gamma: S^1\left(\cong [0,1]/\{0,1\}\right)\to W^2\left(\subset V^3\right)
\end{equation}
\begin{equation}
\left\{
\begin{aligned}
\gamma^1(\theta^1)&=\rho(\theta^1)\cos \theta^1\\
\gamma^2(\theta^1)&=\rho(\theta^1)\sin \theta^1\\
\gamma^3(\theta^1)&=0
\end{aligned}
\right.
\end{equation}
where $\rho(\theta^1)$ is a $C^{\infty}$ function on $S^1$; and (ii) $\bar M^2$ is transverse to $W^2$, so one can take $\theta^2=y^3$ nearby $M^1$.

Along $M^1$, the Matsumoto torsion of $\bar M^2$ is
\begin{equation}\label{eq65}
\begin{aligned}
&\bar{\mathbf M}\left(\gamma(\theta^1)\right)\left({\frac{\partial \gamma}{\partial \theta^1}},{\frac{\partial \gamma}{\partial \theta^1}},{\frac{\partial \gamma}{\partial \theta^1}}\right)
\\&=
\bar{\mathbf A}\left(\gamma(\theta^1)\right)\left({\frac{\partial \gamma}{\partial \theta^1}},{\frac{\partial \gamma}{\partial \theta^1}},{\frac{\partial \gamma}{\partial \theta^1}}\right)
-{\frac{3}{4}}\bar{\mathbf I}\left(\gamma(\theta^1)\right)\left({\frac{\partial \gamma}{\partial \theta^1}}\right)\bar h\left(\gamma(\theta^1)\right)\left({\frac{\partial \gamma}{\partial \theta^1}},{\frac{\partial \gamma}{\partial \theta^1}}\right)
\\&=
\mathbf A\left(\gamma(\theta^1)\right)\left({\frac{\partial \gamma}{\partial \theta^1}},{\frac{\partial \gamma}{\partial \theta^1}},{\frac{\partial \gamma}{\partial \theta^1}}\right)
-{\frac{3}{4}}\bar{\mathbf I}\left(\gamma(\theta^1)\right)\left({\frac{\partial \gamma}{\partial \theta^1}}\right)h\left(\gamma(\theta^1)\right)\left({\frac{\partial \gamma}{\partial \theta^1}},{\frac{\partial \gamma}{\partial \theta^1}}\right)
\\&=
\mathbf I\left(\gamma(\theta^1)\right)\left({\frac{\partial \gamma}{\partial \theta^1}}\right)h\left(\gamma(\theta^1)\right)\left({\frac{\partial \gamma}{\partial \theta^1}},{\frac{\partial \gamma}{\partial \theta^1}}\right)-{\frac{3}{4}}\bar{\mathbf I}\left(\gamma(\theta^1)\right)\left({\frac{\partial \gamma}{\partial \theta^1}}\right)h\left(\gamma(\theta^1)\right)\left({\frac{\partial \gamma}{\partial \theta^1}},{\frac{\partial \gamma}{\partial \theta^1}}\right)
\\&=
\left[\mathbf I\left(\gamma(\theta^1)\right)-{\frac{3}{4}}\bar{\mathbf I}\left(\gamma(\theta^1)\right) \right]\left({\frac{\partial \gamma}{\partial \theta^1}}\right)h\left(\gamma(\theta^1)\right)\left({\frac{\partial \gamma}{\partial \theta^1}},{\frac{\partial \gamma}{\partial \theta^1}}\right)
\end{aligned}
\end{equation}
where the second inequality holds by (\ref{eq61})-(\ref{eq63}) and the third comes from the fact that the Matsumoto torsion of a 2-dimensional Minkowski norm always vanishes (cf. \cite{Shenyibing} \S{2.2.2}).

Recall (\ref{eq215}), we see that $M^1$ (or equivalently $\gamma(\theta^1)$) is a Darboux curve of $\bar M^2$ if and only if $\bar{\mathbf M}\left(\gamma(\theta^1)\right)\left({\frac{\partial \gamma}{\partial \theta^1}},{\frac{\partial \gamma}{\partial \theta^1}},{\frac{\partial \gamma}{\partial \theta^1}}\right)$ always vanishes, by (\ref{eq65}), we have
\begin{equation}\label{eq66}
\begin{aligned}
&\bar{\mathbf M}\left(\gamma(\theta^1)\right)\left({\frac{\partial \gamma}{\partial \theta^1}},{\frac{\partial \gamma}{\partial \theta^1}},{\frac{\partial \gamma}{\partial \theta^1}}\right)\equiv0 \\&\Longleftrightarrow
\mathbf I\left(\gamma(\theta^1)\right)-{\frac{3}{4}}\bar{\mathbf I}\left(\gamma(\theta^1)\right)\equiv0\\
&\Longleftrightarrow
{\frac{d}{d \theta^1}}\log\left[{\frac{{\det (g_{ij})}^{\frac{1}{3}}}{{\det (\bar g_{AB})}^{\frac{1}{4}}}}\left(\gamma(\theta^1)\right)\right]\equiv0\\
&\Longleftrightarrow
\log\left[{\frac{{\det (g_{ij})}^{\frac{1}{3}}/\sigma_F^{\frac{2}{3}}}{{\det (\bar g_{AB})}^{\frac{1}{4}}/\sigma_{\bar F}^{\frac{1}{2}}}}\left(\gamma(\theta^1)\right)\right]\equiv constant.
\end{aligned}
\end{equation}
Which can be rewritten in terms of distorsions as
\begin{equation}\label{eq67}
{\frac{1}{3}}\tau\left(\gamma(\theta^1)\right)-{\frac{1}{4}}\bar\tau\left(\gamma(\theta^1)\right)\equiv constant.
\end{equation}
We summarize the above discussions in the following
\begin{Proposition}\label{prop62}
$M^1$ is the Doarboux curve of $\bar M^2$ if and only if
\begin{equation}
\mathbf T:={\frac{1}{3}}\tau-{\frac{1}{4}}\bar\tau
\end{equation}
is a constant along $M^1$.
\end{Proposition}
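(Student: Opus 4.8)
The plan is to run a single chain of equivalences that converts the Darboux condition into a first-order condition on the distortions, as set up in (\ref{eq65})--(\ref{eq67}). Write $\dot\gamma:=\partial\gamma/\partial\theta^1$ for the tangent of $M^1$. First I would unwind Definition \ref{Darboux}: $M^1=\gamma$ is a Darboux curve of $\bar M^2$ exactly when $\dot\gamma$ is, at each point, a null direction of the Blaschke cubic form $\bar{\mathbf C}$, i.e. $\bar{\mathbf C}(\dot\gamma,\dot\gamma,\dot\gamma)\equiv0$ along $M^1$. By (\ref{eq215}), $\bar{\mathbf C}$ equals $\bar{\mathbf M}$ up to the strictly positive factor $2[\sigma_{\bar F}^2/\det(\bar g_{AB})]^{-1/(n+2)}$, so the null directions of $\bar{\mathbf C}$ and of $\bar{\mathbf M}$ coincide; hence $M^1$ is a Darboux curve if and only if $\bar{\mathbf M}(\dot\gamma,\dot\gamma,\dot\gamma)\equiv0$ along $M^1$.

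Next I would evaluate $\bar{\mathbf M}(\dot\gamma,\dot\gamma,\dot\gamma)$ explicitly. The two inputs are: (i) that $M^1=\bar M^2\cap W^2$ is the indicatrix of the induced planar norm $F=\bar F|_{W^2}$, so that restriction to $W^2$ commutes with differentiation in the $W^2$-directions and yields (\ref{eq61})--(\ref{eq63}), namely $g_{ij}=\bar g_{ij}$, $h_{ij}=\bar h_{ij}$, $\mathbf A_{ijk}=\bar{\mathbf A}_{ijk}$ for $1\le i,j,k\le2$; and (ii) the classical fact that the Matsumoto torsion of any two-dimensional norm vanishes, which for $F$ gives $\mathbf A(\dot\gamma,\dot\gamma,\dot\gamma)=\mathbf I(\dot\gamma)\,h(\dot\gamma,\dot\gamma)$. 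Substituting these into the definition (\ref{eq28}) of $\bar{\mathbf M}$ and contracting the three-dimensional Matsumoto tensor in the single direction $\dot\gamma$ collapses (\ref{eq65}) to
\[
\bar{\mathbf M}\left(\frac{\partial\gamma}{\partial\theta^1},\frac{\partial\gamma}{\partial\theta^1},\frac{\partial\gamma}{\partial\theta^1}\right)=\left[\mathbf I-\frac{3}{4}\bar{\mathbf I}\right]\left(\frac{\partial\gamma}{\partial\theta^1}\right)\,h\left(\frac{\partial\gamma}{\partial\theta^1},\frac{\partial\gamma}{\partial\theta^1}\right),
\]
where $\mathbf I$ is the Cartan form of the planar norm $F$ and $\bar{\mathbf I}$ that of $\bar F$. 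I would stress here that $\mathbf I$ is genuinely different from the restriction of $\bar{\mathbf I}$, since the contraction $\mathbf I_i=g^{jk}\mathbf A_{ijk}$ runs only over the two $W^2$-indices; both forms must therefore be retained.

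To finish, since $h(\dot\gamma,\dot\gamma)>0$ by strict convexity of the indicatrix, the vanishing of $\bar{\mathbf M}(\dot\gamma,\dot\gamma,\dot\gamma)$ along $M^1$ is equivalent to the scalar identity $[\mathbf I-\tfrac34\bar{\mathbf I}](\dot\gamma)\equiv0$. I would then invoke $\mathbf I=d\tau$ and $\bar{\mathbf I}=d\bar\tau$ from (\ref{eq27}), which turns each pairing with $\dot\gamma$ into an ordinary derivative along the curve, $\mathbf I(\dot\gamma)=\frac{d}{d\theta^1}(\tau\circ\gamma)$ and likewise for $\bar\tau$. The condition becomes $\frac{d}{d\theta^1}(\tau-\tfrac34\bar\tau)\equiv0$, and multiplying by $\tfrac13$ this reads $\frac{d}{d\theta^1}\mathbf T\equiv0$ with $\mathbf T=\tfrac13\tau-\tfrac14\bar\tau$; that is, $\mathbf T$ is constant along $M^1$, which is exactly (\ref{eq66})--(\ref{eq67}).

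The routine parts are the index bookkeeping of (\ref{eq65}) and the passage to distortions via (\ref{eq26})--(\ref{eq27}). The step that deserves the most care — and that I regard as the real content — is the correct tracking of the two distinct numerical factors: the weight $\tfrac34$ produced by contracting the \emph{three}-dimensional Matsumoto tensor of $\bar F$ in one direction, versus the weight $1$ produced by the vanishing of the \emph{two}-dimensional Matsumoto tensor of $F$. It is precisely the mismatch of these two that forces the asymmetric coefficients $\tfrac13$ and $\tfrac14$ in $\mathbf T$, and any slip there would alter the invariant. A secondary point worth making explicit is that ``null direction of $\mathbf C$'' must be read as $\mathbf C(v,v,v)=0$ (the correct notion on a surface, giving the three Darboux directions), and not as the vanishing of the one-form $\mathbf C(v,v,\cdot)$, which would be far too strong.
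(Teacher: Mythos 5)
Your argument is correct and follows essentially the same route as the paper: the paper's proof of Proposition \ref{prop62} is exactly the chain (\ref{eq65})--(\ref{eq67}), i.e.\ reducing the Darboux condition via (\ref{eq215}) to $\bar{\mathbf M}(\dot\gamma,\dot\gamma,\dot\gamma)\equiv0$, using (\ref{eq61})--(\ref{eq63}) and the vanishing of the two-dimensional Matsumoto torsion to get the factor $\left[\mathbf I-\tfrac34\bar{\mathbf I}\right](\dot\gamma)\,h(\dot\gamma,\dot\gamma)$, and then integrating $\mathbf I=d\tau$, $\bar{\mathbf I}=d\bar\tau$ along the curve. Your added remarks (that $\mathbf I$ is not the restriction of $\bar{\mathbf I}$, and that ``null direction'' must be read as $\mathbf C(v,v,v)=0$) are consistent with, and indeed clarify, what the paper does implicitly.
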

We are going to prove the main theorem of this section, before this, let's agree with some notations.
Suppose $U$ is a vector in $V^3$ and
\begin{equation}\label{eq68}
\bar F(U)<1
\end{equation}
and let $\bar F_U(y)$ be the Minkowski norm with navigation data $(\bar F, U)$ (cf. \ref{eq29}), then the indicatrix of $\bar F_U$, denoted by $\bar M^2_U$, equals to $\bar M^2-U$ as point sets.

Denote $F_U$ the norm on $W^2$ induced by $\bar F_U$, with its idicatrix denoted by $M_U^1$, then
\begin{equation}\label{eq610}
M^1_U=\bar M^2_U\cap W^2
\end{equation}
which is a strictly convex closed curve on $W^2$.

For each $p\in \bar M^2$ (except two points at which the tangent plane of $\bar M^2$ is parallel to $W^2$), $\bar M^2\cap \left\{W^2+\overrightarrow{Op}\right\}$ is a strictly convex closed curve parallel to $W^2$. On the other hand, for any planer curve $\Gamma$ on $\bar M^2$ parallel to $W^2$, one can choose a vector $U$ such that
\begin{equation}
\Gamma-U=M^1_U,
\end{equation}
for example, let $\Gamma=\bar M^2\cap\left\{W^2+U'\right\}$ for some vector $U'\in V^3$, take a point $q$ in the domain bounded by $\Gamma$ in $\left\{W^2+U'\right\}$, then we can choose $U=\overrightarrow{Oq}$ since $F(\overrightarrow{Oq})<1$.

Let's state and prove the following:
\begin{Theorem}\label{Thm63}
Suppose $\bar F$ is a Minkowski norm on a 3-dimensional vector space $V^3$. Then $\bar F$ is a general $(\alpha,\,\beta)$ norm if and only if
\begin{itemize}
\item [(*)] There exists a 2-dimensional subspace $W^2$ of $V^3$ such that for any $\bar F_U$ determined by (\ref{eq29}) and (\ref{eq68}), the norm $F_U$ induced by $\bar F_U$ on $W^2$ satisfies:
    \begin{equation}\label{eq611}
    {\frac{1}{3}}\tau_U-{\frac{1}{4}}\bar\tau_U= constant
    \end{equation}
\end{itemize}
on $W^2\backslash \{O\}$ ,where $\tau_U$ and $\bar\tau_U$ are the distorsions with respect to $F_U$ and $\bar F_U$.
Furthermore, if $\bar F$ is not a Randers norm and the condition $(*)$ is satiesfied, then
\begin{itemize}
\item [(i)] $W^2$ is defined by $\beta=0$;
\item [(ii)] $F_U$'s are Randers norms.
\end{itemize}
\end{Theorem}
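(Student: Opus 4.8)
The plan is to reduce condition $(*)$, which concerns the whole one-parameter family of navigation deformations $\bar F_U$, to a single affine-geometric statement about the indicatrix $\bar M^2$ itself, and then to feed that statement into Su's Theorem \ref{Thm62} together with Proposition \ref{prop61}. The bridge is the observation that, by (\ref{eq210}), $\bar M^2_U=\bar M^2-U$, so combining this with (\ref{eq610}) gives $M^1_U=\left(\bar M^2\cap(W^2+U)\right)-U$; that is, $M^1_U$ is, up to the translation by $-U$, exactly the planar section of $\bar M^2$ cut by the plane $W^2+U$ parallel to $W^2$. Since the Blaschke cubic form $\mathbf C$ in (\ref{eq215}), and hence the null directions defining the Darboux curves (Definition \ref{Darboux}), are invariant under the equiaffine (in particular volume-preserving, translation) group, the Darboux curves of $\bar M^2_U$ are precisely the $(-U)$-translates of those of $\bar M^2$. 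Applying Proposition \ref{prop62} verbatim to the norm $\bar F_U$ in place of $\bar F$, condition (\ref{eq611}) for a given $U$ is equivalent to $M^1_U$ being a Darboux curve of $\bar M^2_U$, i.e. to the parallel section $\bar M^2\cap(W^2+U)$ being a Darboux curve of $\bar M^2$.

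For sufficiency ($(*)\Rightarrow$ general $(\alpha,\beta)$) I would argue that, as $U$ ranges over all vectors with $\bar F(U)<1$, the planes $W^2+U$ sweep out every plane parallel to $W^2$ meeting the interior of $\bar M^2$, so the corresponding sections foliate $\bar M^2$. By the previous paragraph each such section is a Darboux curve, hence one full branch of the Darboux curves of $\bar M^2$ lies on the parallel planes $W^2+U$. Su's Theorem \ref{Thm62} then forces $\bar M^2$ to be either an affine surface of revolution or a compact affine sphere; compactness rules out Types (I) and (III) and, in the affine-sphere case, leaves only the ellipsoid, the indicatrix of a Randers norm. In the first case Proposition \ref{prop61} gives that $\bar F$ is a general $(\alpha,\beta)$ norm, and in the second $\bar F$ is Randers, hence again a general $(\alpha,\beta)$ norm.

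For necessity (general $(\alpha,\beta)\Rightarrow(*)$) I would use Proposition \ref{prop61} to see that $\bar M^2$ is an (elliptic, because compact) affine surface of revolution, so in the normal form (\ref{61}) the parallels $\theta^2=\mathrm{const}$ are ellipses lying on the planes $\{{y^*}^1=\mathrm{const}\}$ perpendicular to the axis, and these parallels constitute one branch of Darboux curves. I then take $W^2$ to be the two-plane through the origin $O$ parallel to these waist planes; since $O$ lies inside $\bar M^2$, every translate $W^2+U$ with $\bar F(U)<1$ meets the interior and cuts out exactly one such parallel. Thus every section $\bar M^2\cap(W^2+U)$ is a Darboux curve, and reversing the equivalence of the first paragraph yields (\ref{eq611}) for all admissible $U$, which is $(*)$.

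Finally, assuming $(*)$ and that $\bar F$ is not Randers, $\bar M^2$ is a genuine (non-ellipsoidal) affine surface of revolution, whose axis, and hence whose family of waist planes, is unique; since $(*)$ forces the parallel sections by $W^2$ to be the Darboux-curve branch, the planes parallel to $W^2$ must be exactly the waist planes, so $W^2$ is the unique axis-perpendicular two-plane through $O$, and in the adapted coordinate of Theorem \ref{Thm61} this is $\{y^3=0\}=\{\beta=0\}$, giving (i); moreover each section is a parallel, i.e. an ellipse by (\ref{61}), so its translate $M^1_U$, the indicatrix of $F_U$, is an ellipse, and a planar Minkowski norm with elliptical indicatrix is exactly a Randers norm, giving (ii). I expect the main obstacle to be the rigorous justification that $(*)$, quantified over the entire admissible family $\{\bar F_U\}$, is equivalent to the single statement that one branch of Darboux curves of $\bar M^2$ lies on parallel planes: this rests on the translation-invariance of the Blaschke cubic form and on verifying that the sections $\bar M^2\cap(W^2+U)$ genuinely exhaust a foliating branch, rather than an incomplete subfamily, so that Su's Theorem \ref{Thm62} is legitimately applicable; a secondary point is confirming, in the non-Randers case, that $(*)$ selects the axis-perpendicular plane uniquely and that this plane is the zero locus of $\beta$.
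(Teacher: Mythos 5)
Your proposal is correct and follows essentially the same route as the paper: both directions reduce condition $(*)$ via Proposition \ref{prop62} (applied to $\bar F_U$, using $\bar M^2_U=\bar M^2-U$ and the translation invariance of the cubic form) to the statement that the sections of $\bar M^2$ by planes parallel to $W^2$ form one branch of Darboux curves, and then invoke Su's Theorem \ref{Thm62} together with Proposition \ref{prop61}. The only divergence is cosmetic: for the uniqueness of $W^2$ in (i) the paper argues that a second admissible subspace would yield three branches of planar affine lines of curvature and hence an affine sphere (contradicting the non-Randers hypothesis), whereas you appeal directly to the uniqueness of the axis of a non-ellipsoidal compact affine surface of revolution --- the same underlying fact.
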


\begin{proof}
$(\Longrightarrow)$ Suppose $\bar F$ is a general $(\alpha,\,\beta)$ norm which is not of Randers type, one can choose a coordinate $\{y^A\}_{i=1}^{3}$ on $V^3$ such that
\begin{equation}
\left\{\begin{aligned}\alpha(y)&=\sqrt{\sum_{A=1}^{3}(y^A)^2}\\
\beta(y)&=by^3,\,b=constant.
\end{aligned}\right.
\end{equation}
Take $W^2=Span\left\{{\frac{\partial}{\partial y^1}},{\frac{\partial}{\partial y^2}}\right\}$, then $(*)$ follows immediately. For $(i)$ and $(ii)$, Since Proposition \ref{prop61} implies that $\bar M^2$ is an affine surface of revolution, it sufficient to show that when $\bar F$ is not a Randers norm, the axis of $\bar M^2$ is uniquely determined. One can check that (see \cite{Su ADG}, \S{IV.1} and \S{IV.2}):
 \begin{itemize}
 \item [(a)] The $y^3$-axis is parallel to the axis of $\bar M^2$ (denoted by $L$);
 \item [(b)] All curves on $\bar M^2$ parallel to $W^2$ are planer affine lines of curvature;
 \item [(c)] All the meridian curves of $M^2$ are affine lines of curvature, and each meridian line intersects with $L$ at exactly two points.
 \end{itemize}
 If their is another 2-dimensional subspace $W'^2\not=W^2$ satisfies $(*)$, then one can get another branch of planer curvature lines parallel to $W'^2$. So one will get at least three different branches affine lines of curvature on $\bar M^2$, this forces $\bar M^2$ to be an affine sphere, hence $\bar F$ is a Randers norm, a contradiction.\\
 $(\Longleftarrow)$ Suppose that $(*)$ is held, for each $p\in \bar M^2$, take $\Gamma_p=\bar M^2\cap\{W^2+\overrightarrow{Op}\}$. By the discussion before this theorem, we can represent $\Gamma_p$ as $M^1_U+U$ for some $U\in V^3$. While (*) and Proposition \ref{prop62} implies that $\Gamma_p-U$ is a Darboux curve of $\bar M^2_U$, then $\Gamma_p$ is a Darboux curve of $\bar M^2$ passes through $p$. Now we've found one branch of Darboux curves on $\bar M^2$ parallel to $W^2$, by Theorem \ref{Thm62}, $\bar M^2$ is an affine surface of revolution, hence $\bar F$ is a general $(\alpha,\,\beta)$ norm by Proposition \ref{prop61}.
\end{proof}

\section{Some global characterizations of Randers norms}
\label{sec:4}
In this section, we will derive some global geometric quantities which characterizes Randers norms of arbitrary dimensions. The ideal is using the affine isoperimetric inequalities to characterize affine hyperspheres.

We define the affine volume of the indicatrix as the following:
\begin{Definition}\label{df51}
Let $V^{n+1}$ be an $(n+1)$-dimensional vector space and $F$ a Minkowski norm on it. Let $dV_{B-H}$ the Busemann-Hausdorff volume form on $V^{n+1}$. Denote the indicatrix of $F$ by $M^n$. The affine volume of $M^n$ is
\begin{equation}\label{eq50}
S(M^n):=\int_{M^n}1\xi\lrcorner dV_{B-H}.
\end{equation}
where $\xi$ is the affine norm field on $M^n$ defined in (\ref{eq214}).
\end{Definition}
Let's first take a look at Randers norms. For a Randers norm $F_R$, let $(G,W)$ be its navigation data, i.e. $G(y)=\sqrt{G_{AB}y^Ay^B}$ is a Euclidean norm and $W=w^A{\frac{\partial}{\partial y^A}}$ is a vector with ${\|W\|}_{G}^2:=G_{AB}W^AW^B<1$. Plugging $G$ and $W$ into (\ref{eq29}) and by a direct computation, we have
\begin{equation}\label{eqr1}
F_R(y)=\alpha(y)+\beta(y),
\end{equation}
with
\begin{equation}\label{eqr2}
\begin{aligned}
&\alpha(y)=\sqrt{\left({\frac{G_{AB}}{\lambda}}+{\frac{G_{AC}G_{BD}W^CW^D}{\lambda^2}}\right)y^Ay^B}, \\
&\beta(y)={\frac{G_{AB}W^Ay^B}{\lambda}},\\
&\lambda=1-{\|W\|}_{G}^2.
\end{aligned}
\end{equation}
Thus, we have by \cite{Chern and Shen 2005} \S{1.1}
\begin{equation}\label{eqr3}
\begin{aligned}
&\det\left(g_{AB}\right)=\left({\frac{F_R(y)}{\lambda\alpha(y)}}\right)^{n+1}\det\left(G_{AB}\right),\\
&\mathbf I^A(y)={\frac{(n+1)\alpha(y)}{2F_R^2(y)}}\left(\lambda W^A-{\frac{y^A\beta(y)}{\alpha^2(y)}}-{\frac{(1-\lambda)y^A}{F_R(y)}}+{\frac{\beta^2(y)y^A}{\alpha^2(y)F_R(y)}} \right),\\
&\sigma_{F_R}=\sqrt{\det\left(G_{AB}\right)}.
\end{aligned}
\end{equation}
By restricting the above (\ref{eqr1})-(\ref{eqr3}) on the indicatrix of $F_R(y)$ (denoted by $M_R^n$) and combining (\ref{eq21})-(\ref{eq27}) and (\ref{eq214}), we get the affine norm of $M_R^n$
\begin{equation}
\xi(y)=-\left(y^A+W^A\right){\frac{\partial}{\partial y^A}}.
\end{equation}
On the other hand, (\ref{eq210}) and (\ref{eq68}) imply that
\begin{equation}\label{eqr4}
M_R^n=\left\{y\in V^{n+1}\mid G_{AB}y^Ay^B<1\right\}-W
\end{equation}
as point set. Hence $M_R^n$ is an affine hypersphere centred at $-W$ with constant radii $1$ in $V^{n+1}$. It is also easy to check that $S(M_R^n)=(n+1)\omega_{n+1}$ and $L_r=1$ for all $r=1,\,...\,,n$. The case of $r=1$ then implies that if $L_1$ in Theorem \ref{Thm13} is a constant along the indicatrix, then it must be precisely $1$.

For an arbitrary Minkowski norm, we have the following:
\begin{Theorem}\label{Thm52}
\begin{equation}\label{eq50a}
S(M^n)\leq(n+1)\omega_{n+1}
\end{equation}
with the equality holds if and only if $F$ is a Randers metric.
\end{Theorem}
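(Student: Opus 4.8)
The plan is to recognize $S(M^n)$ as the equiaffine (Blaschke) surface area of the indicatrix measured with respect to the volume form $dV_{B-H}$, and then to feed this into the classical affine isoperimetric inequality, using the Busemann--Hausdorff normalization to cancel all the volume factors. First I would identify the integrand. By the apolarity/volume normalization built into the Blaschke structure (cf. \cite{Noumizu-Sasaki}, \S{II.3}), the affine norm field $\xi$ of (\ref{eq214}) satisfies $\xi\lrcorner dV_{B-H}\big|_{M^n}=\sqrt{\det(G_{ij})}\,d\theta^1\wedge\cdots\wedge d\theta^n$, the Riemannian volume element of the Blaschke metric $G$ in (\ref{eq213}). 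Hence $S(M^n)=\int_{M^n}\sqrt{\det(G_{ij})}\,d\theta$ is precisely the total $G$-volume of $M^n$, i.e. the equiaffine surface area of $M^n$ taken with the ambient volume form $dV_{B-H}=\sigma_F\,dy^1\wedge\cdots\wedge dy^{n+1}$.

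Next I would factor out the constant $\sigma_F$. Writing $G^0_{ij}=[\det(g_{AB})]^{-1/(n+2)}h_{ij}$ for the Blaschke metric computed with the standard Lebesgue volume form (that is, $\sigma_F\equiv1$ in (\ref{eq213})), one reads off from (\ref{eq213}) that $G_{ij}=\sigma_F^{2/(n+2)}G^0_{ij}$, so that $S(M^n)=\sigma_F^{\,n/(n+2)}\,\Omega_0(M^n)$, where $\Omega_0(M^n)=\int_{M^n}\sqrt{\det(G^0_{ij})}\,d\theta$ is the ordinary equiaffine surface area of the convex hypersurface $M^n=\partial B_F(1)$. The classical affine isoperimetric inequality (cf. \cite{Blaschke ADG}, \cite{Su ADG}) then provides
\[
\Omega_0(M^n)^{\,n+2}\;\le\;(n+1)^{\,n+2}\,\omega_{n+1}^{\,2}\,\mathrm{Vol}_{\mathbb{R}^{n+1}}\!\left(B_F(1)\right)^{\,n},
\]
with equality if and only if $B_F(1)$ is an ellipsoid.

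Finally I would substitute the Busemann--Hausdorff normalization $\sigma_F=\omega_{n+1}/\mathrm{Vol}_{\mathbb{R}^{n+1}}(B_F(1))$ from (\ref{eq25}). Raising $S(M^n)=\sigma_F^{\,n/(n+2)}\Omega_0(M^n)$ to the power $n+2$ and inserting the inequality, the factor $\mathrm{Vol}_{\mathbb{R}^{n+1}}(B_F(1))^{n}$ cancels against $\sigma_F^{\,n}=\omega_{n+1}^{\,n}/\mathrm{Vol}_{\mathbb{R}^{n+1}}(B_F(1))^{n}$, leaving $S(M^n)^{\,n+2}\le (n+1)^{\,n+2}\omega_{n+1}^{\,n+2}$, which is exactly (\ref{eq50a}). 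Equality holds precisely when $B_F(1)$, equivalently its boundary $M^n$, is an ellipsoid; by the characterization recalled in the introduction (a Minkowski norm is Randers iff its indicatrix is an ellipsoid, via (\ref{eqr4})) this happens exactly when $F$ is a Randers norm, and the resulting value $(n+1)\omega_{n+1}$ matches the direct computation $S(M_R^n)=(n+1)\omega_{n+1}$ carried out above.

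I expect the main obstacle to be the first step: pinning down that $\xi\lrcorner dV_{B-H}$ restricts to the \emph{Blaschke} volume element of $G$ (so that $S(M^n)$ is genuinely the affine surface area, not merely proportional to it), and then tracking the exponent $n/(n+2)$ carefully enough that the scale-\emph{dependent} isoperimetric inequality is converted into the scale-\emph{invariant} bound (\ref{eq50a}); it is exactly the Busemann--Hausdorff choice of $\sigma_F$ that makes this cancellation work, which is why the same argument goes through for other volume forms with only cosmetic changes. The equality analysis is then immediate, since being an ellipsoid is invariant under the translation by $-W$ appearing in (\ref{eqr4}).
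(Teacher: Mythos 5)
Your proposal is correct and follows essentially the same route as the paper: both reduce (\ref{eq50a}) to the classical affine isoperimetric inequality for the ovaloid $M^n=\partial B_F(1)$ by extracting the factor $\sigma_F^{n/(n+2)}$ from the Blaschke volume element $\xi\lrcorner dV_{B-H}$ and then cancelling $\mathrm{Vol}_{\mathbb{R}^{n+1}}(B_F(1))^n$ against the Busemann--Hausdorff normalization (\ref{eq25}), with equality traced back to the ellipsoid case and hence to Randers norms. The only difference is that the paper re-derives the affine isoperimetric inequality in situ (H\"older's inequality applied to $\int_{M^n}K^{1/(n+2)}dV_E$ followed by Steiner symmetrization and Gross's convergence theorem, (\ref{eq511})--(\ref{eq513})), whereas you invoke it as a known result from \cite{Blaschke ADG} and \cite{Su ADG}.
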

\begin{proof}
Choose an inner product $\langle-,-\rangle$ on $V^{n+1}$ such that $\langle{\frac{\partial}{\partial y^A}},{\frac{\partial}{\partial y^B}}\rangle=\delta_{AB}$.
Choose $\{\theta^i\}_{i=1}^{n}$ as the parameter of $M^n$, therefore the Blaschke metric on $M^n$ is
\begin{equation}\label{eq51}
G=G_{ij}d\theta^i\otimes d\theta^j,\,G_{ij}=\left[{\frac{\sigma_F^2}{\det\left(g_{AB}\right)}}\right]^{\frac{1}{n+2}}h_{ij}.
\end{equation}
We obtain the affine volume element of $M^n$ as
\begin{equation}\label{eq52}
\begin{aligned}
\xi\lrcorner dV_{B-H}&=\sqrt{\det\left(G_{ij}\right)}d\theta^1\wedge...\wedge d\theta^n\\
&={\frac{\sigma_F^{\frac{n}{n+2}}}{{\det\left(g_{AB}\right)}^{\frac{n}{n+2}}}}\sqrt{\det\left(h_{ij}\right)}d\theta^1\wedge...\wedge d\theta^n.
\end{aligned}
\end{equation}
By (\ref{eq311}),
\begin{equation}
\det\left(h_{ij}\right)=\det\left(g_{AB}\right)\left[\det\left({\frac{\partial r}{\partial \theta}}\mid r\right)\right]^2,
\end{equation}
where
\begin{equation}
\left({\frac{\partial r}{\partial \theta}}\mid r\right)=
\left({\frac{\partial r}{\partial \theta^1}},\,...\,,{\frac{\partial r}{\partial \theta^n}},r
\right).
\end{equation}
so we have
\begin{equation}\label{eq53}
\begin{aligned}
&\sqrt{\det\left(G_{ij}\right)}d\theta^1\wedge...\wedge d\theta^n\\
&={\sigma_F^{\frac{n}{n+2}}}{\sqrt{\det\left(h_{ij}\right)}}^{\frac{2}{n+2}}\left[\det\left({\frac{\partial r}{\partial \theta}}\mid r\right)\right]^{\frac{n}{n+2}}d\theta^1\wedge...\wedge d\theta^n.
\end{aligned}
\end{equation}
Following Blaschke \cite{Blaschke ADG} and Li-Zhao \cite{Li-Zhao ADG}, let's transform the affine volume form of $M^n$ into a "Euclidean" one. Denote $dV_E$ the volume form of $M^n$ induced by the inner product $\langle-,-\rangle$, it is easy to check that
\begin{equation}\label{eq55}
dV_E=\sqrt{\det\left({\frac{\partial r}{\partial \theta}}\mid \nu\right)^T\left({\frac{\partial r}{\partial \theta}}\mid \nu\right)}^{\frac{1}{2}}d\theta^1\wedge...\wedge d\theta^n,
\end{equation}
where
\begin{equation}
\left({\frac{\partial r}{\partial \theta}}\mid \nu\right)=
\left({\frac{\partial r}{\partial \theta^1}},\,...\,,{\frac{\partial r}{\partial \theta^n}},\nu
\right)
\end{equation}
and $\nu$ is the unit normal field of $M^n$ with respect to $\langle-,-\rangle$. Set
$$\textrm{II}=\textrm{II}_{ij}d\theta^i\otimes d\theta^j$$ the second fundamental form of $M^n$ with respect to $\langle-,-\rangle$ by
\begin{equation}\label{eq56}
D_{\frac{\partial r}{\partial \theta^i}}{\frac{\partial r}{\partial \theta^j}}={\Gamma}^{*k}_{ij}{\frac{\partial r}{\partial \theta^k}}-\textrm{II}_{ij}\nu.
\end{equation}
We have by (\ref{59plus}) and (\ref{eq56}) that
\begin{equation}\label{eq58}
\begin{aligned}
h_{ij}&={\frac{-\det\left({\frac{\partial r}{\partial \theta^1}},...,{\frac{\partial r}{\partial \theta^n}},D_{\frac{\partial r}{\partial \theta^i}}{\frac{\partial r}{\partial \theta^j}}\right)}{\det\left({\frac{\partial r}{\partial \theta^1}},...,{\frac{\partial r}{\partial \theta^n}},r\right)}}\\
&={\frac{\textrm{II}_{ij}\det\left({\frac{\partial r}{\partial \theta}}\mid \nu\right)}{\det\left({\frac{\partial r}{\partial \theta^1}},...,{\frac{\partial r}{\partial \theta^n}},r\right)}}
\end{aligned}
\end{equation}
Plugging (\ref{eq58}) into (\ref{eq53}), we obtain
\begin{equation}\label{eq59}
\begin{aligned}
\sqrt{\det\left(G_{ij}\right)}d\theta^1\wedge...\wedge d\theta^n&=\sigma_F^{\frac{n}{n+2}}\left[\det\left(\textrm{II}_{ij}\right)\det\left({\frac{\partial r}{\partial \theta}}\mid \nu\right)^n\right]^{\frac{1}{n+2}}d\theta^1\wedge...\wedge d\theta^n\\
&=\sigma_F^{\frac{n}{n+2}}K^{\frac{1}{n+2}}\mid \det\left({\frac{\partial r}{\partial \theta}}\mid \nu\right)\mid d\theta^1\wedge...\wedge d\theta^n\\
&=\sigma_F^{\frac{n}{n+2}}K^{\frac{1}{n+2}}dV_E
\end{aligned}
\end{equation}
where $K={\frac{\det\left(\textrm{II}_{ij}\right)}{\det\left({\frac{\partial r}{\partial \theta}}\mid \nu\right)^T\left({\frac{\partial r}{\partial \theta}}\mid \nu\right)}}$ is the Gauss-Kronecker curvature of $M^n$. By (\ref{eq58}), we have
\begin{equation}\label{eq59a}
K=\det(h_{ij}){\frac{\left[\det\left({\frac{\partial r}{\partial \theta^1}},...,{\frac{\partial r}{\partial \theta^n}},r\right)\right]^n}{\left[\det\left({\frac{\partial r}{\partial \theta}}\mid \nu\right)\right]^{n+2}}}.
\end{equation}
We can conclude that $K>0$ everywhere on $M^n$, since:
\begin{itemize}
 \item[(i)] The angular form $(h_{ij})$ is strictly positive definite;
 \item[(ii)] The origin $O$ locates strictly in the interior of the domain enclosed by $M^n$ and $\nu$ is the outer norm, hence
 \begin{equation}\label{eq59b}
 {\frac{\det\left({\frac{\partial r}{\partial \theta^1}},...,{\frac{\partial r}{\partial \theta^n}},r\right)}{\det\left({\frac{\partial r}{\partial \theta}}\mid \nu\right)}}=\langle r,\,\nu\rangle>0.
 \end{equation}
\end{itemize}
Hence $M^n$ is an ovaloid with respect to the chosen inner product, and the classical theory of ovaloids may then apply to $M^n$.
By the above computations, we obtain
\begin{equation}\label{eq510}
S(M^n)=\sigma_F^{\frac{n}{n+2}}\int_{M^n}K^{\frac{1}{n+2}}dV_E
\end{equation}
To estimate RHS of (\ref{eq510}), we use the H\"{o}lder inequality and the convexity of $M^n$, precisely we have
\begin{equation}\label{eq511}
\left({\frac{\int_{M^n}K^{\frac{1}{n+2}}dV_E}{\int_{M^n}1dV_E}}\right)^{n+2}\leq{\frac{\int_{M^n}KdV_E}{\int_{M^n}1dV_E}}={\frac{(n+1)\omega_{n+1}}{\int_{M^n}1dV_E}}
\end{equation}
Hence, we have
\begin{equation}\label{eq511a}
S(M^n)\leq \sigma_F^{\frac{n}{n+2}}\left[(n+1)\omega_{n+1}\left(\int_{M^n}1dV_E\right)^{n+1}\right]^{\frac{1}{n+2}}
\end{equation}
Set a standard Euclidean ball $\mathbf{B}$ (with $\Sigma^n$ its boundary) of volume $Vol_{\mathbb{R}^{n+1}}(\mathbf{B})=Vol_{\mathbb{R}^{n+1}}(B_F(1))$, we have the $n$-dimensional volume of $\Sigma^n$ is
\begin{equation}\label{eq512}
S_E(\Sigma^n):=\int_{\Sigma^n}1dV_E=(n+1)\omega_{n+1}^{\frac{1}{n+1}}Vol_{\mathbb{R}^{n+1}}^{\frac{n}{n+1}}(\mathbf{B}).
\end{equation}
Due to the convergency theorem of W.Gross (cf. Blaschke \cite{Blaschke ADG}, \S{115}), for $\forall \epsilon>0$, by taking sufficiently many suitable Steiner symmetrization of $M^n$, one can construct a new ovaloid $\dot{M}^n$ such that
\begin{equation}\label{eq512a}
S_E(\dot{M}^n):=\int_{\dot{M}^n}1dV_E<S_E(\Sigma^n)+\epsilon
\end{equation}
and
\begin{equation}\label{eq512b}
\begin{aligned}
S(M^n)\leq S(\dot{M}^n)&\leq \sigma_F^{\frac{n}{n+2}}\left[(n+1)\omega_{n+1}S_E(\dot{M}^n)^{n+1}\right]^{\frac{1}{n+2}}\\
&\leq \sigma_F^{\frac{n}{n+2}}\left[(n+1)\omega_{n+1}(S_E(\Sigma^n)+\epsilon)^{n+1}\right]^{\frac{1}{n+2}},
%\\&=\sigma_F^{\frac{n}{n+2}}\left[(n+1)\omega_{n+1}((n+1)\omega_{n+1}^{\frac{1}{n+1}}Vol_{\mathbb{R}^{n+1}}^{\frac{n}{n+1}}(\mathbf{B})+\epsilon)^{n+1}\right]^{\frac{1}{n+2}}
\end{aligned}
\end{equation}
where we've used the analogous inequality of (\ref{eq511a}) for $\dot M^n$. Take $\epsilon\to0$, we have
\begin{equation}\label{eq513}
S(M^n)\leq(n+1)\sigma_F^{\frac{n}{n+2}}\left[\omega_{n+1}^2Vol_{\mathbb{R}^{n+1}}^{n}(B_F(1))\right]^{\frac{1}{n+2}}.
\end{equation}
By definition of the Busemann-Hausdorff volume form, we have
\begin{equation}
\sigma_F Vol_{\mathbb{R}^{n+1}}(B_F(1))=\omega_{n+1}.
\end{equation}
Plugging the above equation into (\ref{eq513}), (\ref{eq50a}) is proved.

Suppose $S(M^n)=(n+1)\omega_{n+1}$ for some $M^n$, then the midpoints of parallel chords along any direction must lie on an $n$-dimensional hyperplane, hence $M^n$ must be an ellipsoid (cf. \cite{Li-Zhao ADG} \S{5.1}).
\end{proof}

%\begin{Corollary}\label{Coro51}
%For any fixed inner product $\langle-,-\rangle$ on $V^{n+1}$, $M^n$ is an ovaloid embedded in $\left(V^{n+1},\,\langle-,-\rangle\right)$.
%\end{Corollary}
%\begin{proof}
%Without lose of generality, one can assume that $\langle{\frac{\partial}{\partial y^A}},{\frac{\partial}{\partial y^B}}\rangle=\delta_{AB}$. Then (\ref{eq58}) implies that
%\begin{equation}
%\textrm{II}_{ij}={\frac{h_{ij}\det\left({\frac{\partial r}{\partial \theta^1}},...,{\frac{\partial r}{\partial \theta^n}},r\right)}{\det\left({\frac{\partial r}{\partial \theta}}\mid \nu\right)}},
%\end{equation}
%hence the Gauss-Kronecker curvature of $M^n$ (with respect to the chosen inner product) is given by
%\begin{equation}
%K=\det(h_{ij}){\frac{\left[\det\left({\frac{\partial r}{\partial \theta^1}},...,{\frac{\partial r}{\partial \theta^n}},r\right)\right]^n}{\left[\det\left({\frac{\partial r}{\partial \theta}}\mid \nu\right)\right]^{n+2}}}.
%\end{equation}
%We have $K>0$ everywhere on $M^n$, since:
%\begin{itemize}
 %\item[(i)] The angular form $(h_{ij})$ is positive definite;
 %\item[(ii)] The origin $O$ locates strictly in the interior of the domain enclosed by $M^n$ and $\nu$ is the outer norm, hence
 %\begin{equation}
 %{\frac{\det\left({\frac{\partial r}{\partial \theta^1}},...,{\frac{\partial r}{\partial \theta^n}},r\right)}{\det\left({\frac{\partial r}{\partial \theta}}\mid \nu\right)}}=\langle r,\,\nu\rangle>0.
 %\end{equation}
%\end{itemize}
%\end{proof}
Theorem \ref{Thm52} is actually an analog of the classical affine isoperimetric inequalities (see Su \cite{Su ADG} \S{II.5}, Blaschke \cite{Blaschke ADG} \S{65}, \S{72}, \S{73}, Li-Zhao \cite{Li-Zhao ADG} \S{5.1}, etc) in an arbitrary Minkowski space. %Corollary \ref{Coro51} implies that the classical theory of ovaloids is available for the indicatrix $M^n$, if one picks an arbitrary Euclidean norm on $V^{n+1}$.

\begin{Remark}\label{rmk51}
For an arbitrary volume form $d\bar V=
\bar{\sigma}_F dy^1\wedge...\wedge dy^{n+1}$ defined on $V^{n+1}$, denote
\begin{equation}
Vol_{\bar{\sigma}_F}(\Omega):=\int_\Omega{1d\bar V}
\end{equation}
the F-volume for a measurable set $\Omega$ with respect to $d\bar V$.
Then by a similar argument, Theorem (\ref{Thm52}) is still available in the sense that
\begin{equation}\label{eq514}
\bar S(M^n)\leq(n+1)\left[\omega^2_{n+1}Vol_{\bar \sigma_F}^n(B_F(1))\right]^{\frac{1}{n+2}}
\end{equation}
with $S(M^n):=\int_{M^n}\bar\xi\lrcorner d\bar V$ the corresponding affine area of $M^n$. The equality holds if and only if $F$ is a Randers metric.
\end{Remark}

The above (\ref{eq513}) (or (\ref{eq514}) equivalently) actually leads to the following integral inequalities of affine mean curvatures (Theorem \ref{Thm53} and Theorem \ref{Thm54} below), which can again give characterizations of Randers norms. Because the proofs are similar to those of Theorem 2.3 and Theorem 2.4 in Chapter 5 of \cite{Li-Zhao ADG}, we will just give sketches.

\begin{Theorem}\label{Thm53}
For any integers $k$ and $k^*$ satisfy $0\leq k<k^*\leq n+1,\, k<{\frac{n+2}{2}}$, we have
\begin{equation}\label{eq515}
\left(\int_{M^n}L_{k^*-1}\xi\lrcorner dV_{B-H}\right)^{n+2-2k}\left(\int_{M^n}L_{k-1}\xi\lrcorner dV_{B-H}\right)^{2k^*-n-2}\leq\left((n+1)\omega_{n+1}\right)^{2(k^*-k)}.
\end{equation}
The equality holds if and only if $F$ is a Randers norm. The $L_i$'s are defined in (\ref{eq217}).
\end{Theorem}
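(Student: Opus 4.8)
The plan is to follow the scheme of Theorem~2.3 in Chapter~5 of \cite{Li-Zhao ADG}, replacing the classical affine isoperimetric inequality by Theorem~\ref{Thm52}, which is its exact counterpart in the present Minkowski setting. Write $d\omega:=\xi\lrcorner dV_{B-H}$ for the affine area element of $M^n$, abbreviate $C:=(n+1)\omega_{n+1}$, and set
\[
\mathcal{M}_k:=\int_{M^n}L_{k-1}\,d\omega,\qquad 1\le k\le n+1,
\]
so that $\mathcal{M}_1=S(M^n)$ (since $L_0\equiv1$) and, in the Randers (ellipsoid) case, all $\lambda_i\equiv1$ force $\mathcal{M}_k=C$ for every $k$. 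For the boundary index $k=0$ one fixes $\mathcal{M}_0$ by the usual convention $L_{-1}:=1$; this value is not needed in the two essential estimates below. With $m:=\frac{n+2}{2}$, and after taking the (positive) square root of both sides, the assertion (\ref{eq515}) is equivalent to
\[
\mathcal{M}_{k^*}^{\,m-k}\,\mathcal{M}_k^{\,k^*-m}\le C^{\,k^*-k},
\]
and the hypothesis $k<\frac{n+2}{2}$ is exactly the positivity $m-k>0$ of the first exponent.

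First I would establish the two structural facts about the sequence $\{\mathcal{M}_k\}$ that, following \cite{Li-Zhao ADG}, carry all the weight. The first is the family of upper bounds $\mathcal{M}_k\le C$, obtained exactly as in Theorem~\ref{Thm52}: by (\ref{eq59}) one writes $d\omega=\sigma_F^{n/(n+2)}K^{1/(n+2)}dV_E$ on the auxiliary Euclidean ovaloid, expresses $L_{k-1}$ through the affine shape operator, and applies the H\"older inequality together with the Steiner-symmetrization (W.~Gross) limiting argument used there; the normalization $\sigma_F\,Vol_{\mathbb{R}^{n+1}}(B_F(1))=\omega_{n+1}$ again collapses the right-hand side to $C$. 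The second is a log-convexity (Minkowski-type) inequality
\[
\mathcal{M}_{k-1}\,\mathcal{M}_{k+1}\ge \mathcal{M}_k^{2},
\]
which is the affine analogue of the Minkowski inequalities: it follows from the affine Minkowski integral formulas relating consecutive $\int L_{r}\,d\omega$ on a Blaschke hypersurface, combined with the Cauchy--Schwarz inequality. Both are the Minkowski-space transcriptions of the computations in \cite{Li-Zhao ADG}.

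Granting these inputs, the inequality drops out by a short interpolation at the barycentric index $m=\frac{n+2}{2}$. Put $\phi_k:=\log(\mathcal{M}_k/C)\le0$; the claim reads $(m-k)\phi_{k^*}+(k^*-m)\phi_k\le0$, and $\phi$ is convex by the log-convexity above. If $k^*\ge m$ both coefficients are nonnegative and both $\phi$'s are nonpositive, so the sum is $\le0$. If $k<k^*<m$, writing $k^*$ as the convex combination $k^*=\frac{m-k^*}{m-k}\,k+\frac{k^*-k}{m-k}\,m$ and using convexity gives $(m-k)\phi_{k^*}\le(m-k^*)\phi_k+(k^*-k)\phi_m$, whence
\[
(m-k)\phi_{k^*}+(k^*-m)\phi_k\le(k^*-k)\phi_m\le0,
\]
since $\phi_m\le0$ (for non-integral $m$ this follows from convexity and $\phi_{\lfloor m\rfloor},\phi_{\lceil m\rceil}\le0$). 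This establishes (\ref{eq515}).

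For the equality case one runs the chain backwards: equality forces equality in the anchoring bound $\mathcal{M}_\bullet=C$, and the equality discussion of Theorem~\ref{Thm52} then forces $M^n$ to be an ellipsoid, i.e.\ $\bar F$ to be a Randers norm; conversely the ellipsoid turns every step above into an equality. The main obstacle is not the interpolation, which is elementary, but the honest verification of the two structural inputs in the indicatrix setting: transcribing the affine Minkowski integral formulas and the Steiner-symmetrization argument to $d\omega=\xi\lrcorner dV_{B-H}$ so that the constant is exactly $C=(n+1)\omega_{n+1}$, and checking that the log-convexity step is compatible with the Busemann--Hausdorff normalization. Since these are precisely the computations carried out in \cite{Li-Zhao ADG}, I would present them only in sketch form and refer there for the details.
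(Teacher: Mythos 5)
Your overall scheme---anchor the sequence $\mathcal{M}_k=\int_{M^n}L_{k-1}\,\xi\lrcorner dV_{B-H}$ at $\mathcal{M}_1=S(M^n)\le (n+1)\omega_{n+1}$ via Theorem \ref{Thm52} and then propagate along $k$ through a quadratic inequality between consecutive terms---has the right shape, but your key lemma is stated with the inequality reversed, and the reversal breaks your interpolation. The paper identifies $\frac{1}{n+1}\mathcal{M}_{k+1}$ with the mixed volume $V_{k+1}$ of the unit ball $B_F(1)$ and the convex body $\Xi$ enclosed by the affine normal image $-\xi(M^n)$, by expanding $Vol_{\sigma_F}(\Omega_t)$ for the family $r_t=r-t\xi$ and using $\partial\xi/\partial\theta^i=-s_i^j\,\partial r/\partial\theta^j$. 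The Alexandrov--Fenchel inequality then gives $V_k^2\ge V_{k-1}V_{k+1}$: the sequence is \emph{log-concave}. Your claimed ``$\mathcal{M}_{k-1}\mathcal{M}_{k+1}\ge\mathcal{M}_k^2$'' is the opposite inequality and is false in general, and your Case $k<k^*<m$, which writes $k^*$ as a convex combination of $k$ and $m$ and uses convexity of $\phi_k=\log(\mathcal{M}_k/C)$, does not go through for a concave $\phi$. The theorem is still reachable, but by a different elementary step: since $V_0=Vol_{\sigma_F}(B_F(1))=\omega_{n+1}$ exactly (the Busemann--Hausdorff normalization), Theorem \ref{Thm52} gives $\phi_1\le 0=\phi_0$, concavity then makes $\phi$ non-increasing and nonpositive, and for $k^*<m$ one simply estimates $(m-k)\phi_{k^*}\le(m-k^*)\phi_{k^*}\le(m-k^*)\phi_k$, which is (\ref{eq515}).

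Beyond the sign error, neither of your two ``structural inputs'' is actually proved. The bound $\mathcal{M}_k\le(n+1)\omega_{n+1}$ for every $k$ does not follow ``exactly as in Theorem \ref{Thm52}'': the H\"older-plus-Steiner argument there exploits $\int_{M^n}K\,dV_E=(n+1)\omega_{n+1}$ and has no analogue with $K^{1/(n+2)}$ replaced by $L_{k-1}$; in the paper this bound is a corollary of the mixed-volume chain, not an independent estimate. Likewise ``affine Minkowski integral formulas plus Cauchy--Schwarz'' is not a proof of the quadratic inequality; the honest source is Alexandrov--Fenchel, which is exactly where the construction you omit (the map $y\mapsto y-t\xi(y)$ and the Minkowski sum $\Omega_t=\Omega_0+t\Xi$) is needed. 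Two smaller points: the paper takes $V_0=Vol_{\sigma_F}(B_F(1))$ rather than $\frac{1}{n+1}\int_{M^n}L_{-1}\,\xi\lrcorner dV_{B-H}$ with $L_{-1}=1$, so your convention for $\mathcal{M}_0$ changes the meaning of the $k=0$ case; and your equality discussion is fine in outline, but only once the chain is set up in the correct (log-concave) direction so that equality forces equality in both (\ref{eq521}) and (\ref{eq522}).
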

\begin{proof}
\emph{Step 1.} We first construct a new hypersurface $\Theta^n$ in $V^{n+1}$ by
\begin{equation}\label{eq516}
\begin{aligned}
\xi:\,M^n&\to\Theta^n\\
y&\to-\xi(y).
\end{aligned}
\end{equation}
As $M^n$ is strictly convex, $L_n>0$ everywhere on $M^n$, hence $\Theta^n$ is diffeomorphic to $M^n$ and is an ovaloid. Denote $\Xi$ the convex domain enclosed by $\Theta^n$. Let $M^n_t$ be a series of hypersurfaces defined by
\begin{equation}\label{eq517}
\begin{aligned}
r_t:\,M^n&\to M_t^n\\
y&\to y-t\xi(y),
\end{aligned}
\end{equation}
and $\Omega_t$ the domain enclosed by $M^n_t$. Note that $\Omega_0=B_F(1)$. Denote the mixed volume (cf. \cite{Burago}, pp.136-138) by
\begin{equation}\label{eq517}
V_k:=Vol_{\sigma_F}\left(\underbrace{B_F(1),\,...\,,B_F(1)}_{(n+1-k) -times},\underbrace{\Xi,\,...\,,\Xi}_{k -times}\right).
\end{equation}
\emph{Step 2.} It can be showed that $M^n_t$'s are all ovaloids, hence $\Omega_t$'s are convex bodies. By $\partial \Omega_t=M^n_t$, we have
\begin{equation}\label{eq518}
Vol_{\sigma_F}(\Omega_t)={\frac{1}{(n+1)!}}\int_{S^n(1)}\det\left({\frac{\partial r_t}{\partial \theta^1}},\,...\,,{\frac{\partial r_t}{\partial \theta^n}},\,r_t\right)d\theta^1\wedge...\wedge d\theta^n,
\end{equation}
where
\begin{equation}\label{eq518a}
r_t(\theta)=r(\theta)-t\xi\left(r(\theta)\right).
\end{equation}
On the other hand, since $\Omega_t$'s are convex bodies, we have
\begin{equation}\label{eq519}
\Omega_t=\Omega_0+t\Xi,
\end{equation}
hence
\begin{equation}\label{eq519a}
Vol_{\sigma_F}(\Omega_t)=\sum_{k=0}^n{\frac{(n+1)!}{k!(n+1-k)!}}V_k.
\end{equation}
Comparing the coefficients of $t^k$ and combining (\ref{eq216}) (\ref{eq217}), we have
\begin{equation}\label{eq520}
\begin{aligned}
&V_{k+1}={\frac{1}{n+1}}\int_{M^n}L_k\xi\lrcorner dV_{B-H},\quad k\geq1\\
&V_1={\frac{1}{n+1}}S(M^n),\quad V_0=Vol_{\sigma_F}\left(B_F(1)\right),
\end{aligned}
\end{equation}
where we've used that
\begin{equation}
{\frac{\partial \xi}{\partial \theta^i}}=-s^j_i{\frac{\partial r}{\partial \theta^j}}
\end{equation}
in computing the first equality of (\ref{eq520}).\\
\emph{Step 3.} The Alexanderov-Fenchel inequality (cf. \cite{Burago}, pp.143) yields that
\begin{equation}\label{eq521}
V_k^2\geq V_{k-1}V_{k+1},\quad1\leq k\leq n
\end{equation}
and recall (\ref{eq513}) that
\begin{equation}\label{eq522}
V_1^{n+2}\leq \omega_{n+1}^2V_0^n.
\end{equation}
Iterating (\ref{eq521}) and (\ref{eq522}) yields
\begin{equation}\label{eq523}
V_k^{2k^*-n-2}V_{k^*}^{n+2-2k}\leq \omega_{n+1}^{2(k^*-k)},
\end{equation}
and combining (\ref{eq520}), (\ref{eq515}) is proved. While the equality holds in (\ref{eq515}) if and only if the equalities hold in (\ref{eq521}) and (\ref{eq522}), hence in (\ref{eq513}) as well, which implies that $M^n$ is an ellipsoid, and hence $F$ is a Randers norm.
\end{proof}
The final characterization is given by integral of $\sqrt{L_n}$:
\begin{Theorem}\label{Thm54}
\begin{equation}\label{eq524}
\int_{M^n}\sqrt{L_n}\xi\lrcorner dV_{B-H}\leq(n+1)\omega_{n+1}.
\end{equation}
where
\begin{equation}\label{eq525}
L_n=\det\left(\delta_i^j+{\frac{2}{n+2}}h^{jk}\mathbf I_{i;k}-{\frac{2n}{(n+2)^2}}h^{jk}\mathbf I_i\mathbf I_k\right),
\end{equation}
as defined in (\ref{eq217}). The equality holds if and only if $F$ is a Randers metric.
\end{Theorem}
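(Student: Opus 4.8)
The plan is to reduce \ref{eq524} to the mixed-volume identities already recorded in the proof of Theorem~\ref{Thm53}, and then to combine the Cauchy--Schwarz inequality with the log-concavity of those mixed volumes. First I would recall the mixed volumes $V_k$ and the identities \ref{eq520} from the proof of Theorem~\ref{Thm53}. Since the Busemann--Hausdorff normalization gives $\sigma_F Vol_{\mathbb{R}^{n+1}}(B_F(1))=\omega_{n+1}$, one has $V_0=Vol_{\sigma_F}(B_F(1))=\omega_{n+1}$, while $S(M^n)=(n+1)V_1$ and $\int_{M^n}L_n\,\xi\lrcorner dV_{B-H}=(n+1)V_{n+1}$.

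Next I would apply the Cauchy--Schwarz inequality to the functions $\sqrt{L_n}$ and $1$ against the affine area element $\xi\lrcorner dV_{B-H}$:
\begin{equation}
\int_{M^n}\sqrt{L_n}\,\xi\lrcorner dV_{B-H}\leq\left(\int_{M^n}L_n\,\xi\lrcorner dV_{B-H}\right)^{\frac{1}{2}}\left(\int_{M^n}1\,\xi\lrcorner dV_{B-H}\right)^{\frac{1}{2}}=(n+1)\sqrt{V_1V_{n+1}}.
\end{equation}
Thus it suffices to prove the two bounds $V_1\leq\omega_{n+1}$ and $V_{n+1}\leq\omega_{n+1}$, after which substituting $V_1V_{n+1}\leq\omega_{n+1}^2$ yields \ref{eq524} at once.

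The bound $V_1\leq\omega_{n+1}$ is precisely Theorem~\ref{Thm52} (equivalently \ref{eq522} together with $V_0=\omega_{n+1}$, which reads $V_1\leq V_0$). For $V_{n+1}$ I would invoke the Alexandrov--Fenchel inequalities \ref{eq521}: they assert that the sequence $\{V_k\}_{k=0}^{n+1}$ is log-concave, so the ratios $V_k/V_{k-1}$ are non-increasing in $k$. Since $V_1/V_0\leq1$, every subsequent ratio is $\leq1$, whence $\omega_{n+1}=V_0\geq V_1\geq\cdots\geq V_{n+1}$, and in particular $V_{n+1}\leq\omega_{n+1}$. This completes the inequality.

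For the equality discussion, equality in \ref{eq524} forces both equality in Cauchy--Schwarz and $V_1V_{n+1}=\omega_{n+1}^2$; combined with $V_1,V_{n+1}\leq\omega_{n+1}$ this gives $V_1=\omega_{n+1}$, i.e.\ $S(M^n)=(n+1)\omega_{n+1}$, so $M^n$ is an ellipsoid and $F$ is a Randers norm by the equality case of Theorem~\ref{Thm52}. Conversely, a Randers norm has $L_n\equiv1$ and $S(M_R^n)=(n+1)\omega_{n+1}$, so equality holds. The step I expect to be most delicate is the monotonicity $V_{n+1}\leq\omega_{n+1}$: it hinges on feeding the anchoring inequality $V_1\leq V_0$ into the log-concavity \ref{eq521}, and it is exactly here that the normalization $V_0=\omega_{n+1}$ furnished by the Busemann--Hausdorff volume is indispensable.
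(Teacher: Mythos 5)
Your proposal is correct and follows essentially the same route as the paper: the paper's (omitted) proof is exactly ``H\"older (here Cauchy--Schwarz) applied to $\sqrt{L_n}$ against the affine area element, combined with (\ref{eq523})'', and your bounds $V_1\leq\omega_{n+1}$, $V_{n+1}\leq\omega_{n+1}$ (hence $V_1V_{n+1}\leq\omega_{n+1}^2$) are precisely the instances of (\ref{eq523}) with $(k,k^*)=(0,1)$ and $(0,n+1)$, obtained by the same iteration of the Alexandrov--Fenchel inequality (\ref{eq521}) anchored at (\ref{eq522}). Your treatment of the equality case via $V_1=\omega_{n+1}$ and the equality case of Theorem \ref{Thm52} likewise matches the paper's intent, so the only difference is that you have written out the details the paper delegates to Li--Zhao.
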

\begin{proof}
As $L_n>0$ everywhere on $M^n$, (\ref{eq524}) is a consequence of the H\"{o}lder inequality and (\ref{eq523}). The proof is about the same as that of Theorem 2.4 of \cite{Li-Zhao ADG}, Chapter 5, hence omitted.
\end{proof}
\begin{Remark}
Note that by Remark \ref{rmk51}, Theorem \ref{Thm53} and Theorem \ref{Thm54} hold independently with the choice of $\sigma_F$, hence available for any volume form on $V^{n+1}$.
\end{Remark}

\section{Acknowledgement}
I would like to sincerely thank Zhenye Li for helpful discussions on the manuscript.

%\section{Section title}
%\label{sec:1}
%\begin{Theorem}
%ddfg
%\end{Theorem}
%and \cite{RefJ}
%\subsection{Subsection title}
%\label{sec:2}
%as required. Don't forget to give each section
%and subsection a unique label (see Sect.~\ref{sec:1}).
%
% For one-column wide figures use
%\begin{figure}
% Use the relevant command for your figure-insertion program
% to insert the figure file.
% For example, with the option graphics use
%\resizebox{0.75\textwidth}{!}{%
%  \includegraphics{leer.eps}
%}
% If not, use
%\vspace{5cm}       % Give the correct figure height in cm
%\caption{Please write your figure caption here}
%\label{fig:1}       % Give a unique label
%\end{figure}
%
% For two-column wide figures use
%\begin{figure*}
% Use the relevant command for your figure-insertion program
% to insert the figure file. See example above.
% If not, use
%\vspace*{5cm}       % Give the correct figure height in cm
%\caption{Please write your figure caption here}
%\label{fig:2}       % Give a unique label
%\end{figure*}
%
% For tables use
%\begin{table}
%\caption{Please write your table caption here}
%\label{tab:1}       % Give a unique label
% For LaTeX tables use
%\begin{tabular}{lll}
%\hline\noalign{\smallskip}
%first & second & third  \\
%\noalign{\smallskip}\hline\noalign{\smallskip}
%number & number & number \\
%number & number & number \\
%\noalign{\smallskip}\hline
%\end{tabular}
% Or use
%\vspace*{5cm}  % with the correct table height
%\end{table}
%
% BibTeX users please use
% \bibliographystyle{}
% \bibliography{}
%
% Non-BibTeX users please use

\end{document}